\newtheorem*{thm}{Theorem A } 
\newtheorem*{theo}{Theorem B}
\newtheorem{theorem}{Theorem}
\newtheorem{lemma}{Lemma}
\newtheorem{definition}{Definition}
\begin{document}
	\author{ G. Cagareishvili, V. Tsagareishvili, and G. Tutberidze}
	\title[The properties of general Fourier partial sums]{The properties of general Fourier partial sums of functions $f \in C_L$}

	\address{Giorgi Cagareishvili,Ivane Javakhishvili Tbilisi State University, Department of Mathematics, Faculty of Exact and Natural Sciences, Chavchavadze str. 1, Tbilisi 0128, Georgia }
	\email {giorgicagareishvili7@gmail.com}  
	
	\address{ Vakhtang Tsagareishvili, Ivane Javakhishvili Tbilisi State University, Department of Mathematics, Faculty of Exact and Natural Sciences, Chavchavadze str. 1, Tbilisi 0128, Georgia  }
	\email{cagare@ymail.com}
	
	\address{Giorgi Tutberidze, The University of Georgia, Institute of Mathematics, 77a Merab Kostava St, Tbilisi 0128, Georgia and Ivane Javakhishvili Tbilisi State University, Faculty of Exact and Natural Sciences, Chavchavadze str. 1, Tbilisi 0128, Georgia}
	\email{g.tutberidze@ug.edu.ge \and g.tutberidze@tsu.ge}

	\thanks{}
	\date{}
	\maketitle

	\begin{abstract}
		In this paper, we investigated the Fourier partial sums with respect to general orthonormal systems when the function $f$ belongs to some differentiable class of functions
	\end{abstract}
	
	\textbf{2020 Mathematics Subject Classification.} 42C10, 46B07
	
\textbf{Key words and phrases:} \keywords{General Fourier series,  Fourier coefficients, Partial sums, Lipschitz class, Differentiable functions, Orthonormal systems, Banach space.}
	
\section{Introduction}\label{sec1}

To maintain the flow of our discussion and the clarity of the proofs for our key findings, we have compiled all relevant notations, definitions, and preliminary concepts in Section 2.

As S. Banach proved that good differential properties of a function do not guarantee the a.e. boundedness of the Fourier partial sum of this function with respect to general orthonormal systems (ONS). In this paper we have studied the Fourier partial sums with respect to general ONS when function $f$ belongs to some differentiable class of functions. We also prove that this result is, in a sense, sharp. See Theorem \ref{t3}.

Concerning some recent such applications equipped with the present authors, we refer to the PhD. theses \cite{harpal} and \cite{tutberidze} and the references therein. In particular, some important engineering and industrial applications are investigated in these PhD. theses, e.g. concerning structural Health Monitoring, Artificial Intelligence, Neural Networks, Signal processing, Operational Model Analysis (OMA), Damage Detection of Bridges with focus on the impressive H\r alogaland Bridge in Narvik.

For our investigation, it is interesting that according to Menchov's and Banach's results, the convergence of general orthonormal series and the convergence of general Fourier series for functions from some differential classes are different problems. In the first case, the coefficients of the orthonormal series play a decisive role. In the second case, the fact that a function $f$ $(f \ne 0)$ belongs to any differential class does not guarantee the convergence of its Fourier series for general ONSs. Therefore, for the Fourier series, with respect to an ONS, to be convergent, one should impose additional conditions on the functions $\varphi_n$ of the ONS $(\varphi_n).$ Our main problem is to find the conditions for the functions  $\varphi_n$ of an ONS $(\varphi_n)$ for the Fourier series of every function of $Lip1$ classes to be bounded the Fourier partial sums.

Another motivation for this paper is to use our main results to obtain some new results concerning convergence/divergence of general Fourier series. Some other results for this case can be found in \cite{Edvards, GogoladzeTsagareishvili, GogoladzeTsagareishvili_5, GogoladzeTsagareishvili_6, KolmogorovFomin, PTT, Olevskii, Rademacher, Tandori, Cagareishvili,  TsaTutCa, tsatut3, tut1}. See also  the book \cite{ PTWbook} and monographs \cite{Alexits, KaczmarzSteinhaus, KashinSaakyan}.

The main results Theorems \ref{t2} and \ref{t3} are presented and proved in Section 3.

\section{Auxiliary definitions and results} \label{sec2}
By $Lip1$ we denote the class of functions $f$ from  $C(0,1)$, for which
$$\|f(x)-f(y)\|=O(h), \text{ when } \max |x-y| \leq h.$$
Let $C_L$ be the class of functions $f$ if 
$\frac{d}{dx}f \in Lip1.$

Suppose that $f\in L_2$ be an arbitrary function and $(\varphi_n)$  be ONS on $[0,1]$. Then the numbers
$$
C_n(f)=\int_0^1 f(x)  \varphi_n (x)\,dx, \quad n=1,2,\dots\,,
$$
are the Fourier coefficients of function $f$ with respect to the system $(\varphi_n)$ and
\begin{equation}\label{eq2}
	\sum_{n=1}^\infty   C_n (f)\varphi_n (x)
\end{equation}
is the Fourier series of this function $f$.

The general partial sum $S_n(x,f)$ of the series \eqref{eq2} is defined as follows
\begin{eqnarray}\label{eq2.2}
	S_n (x,f)=\sum_{k=1}^n  C_k (f)\varphi_k (x) .
\end{eqnarray}
Next, we can define $M_n(x)$ as follows $\left(x \in [0,1]\right)$ 

\begin{equation}\label{eq3}
	M_n(x)=\frac{1}{n} \sum_{i=1}^{n-1}\left|\int_{0}^{\frac{i}{n}}Q_n(u,x)du\right|,
\end{equation}
where
\begin{equation}\label{eq4}
	Q_n (u,x)=\sum_{k=1}^n  g_k (u)\varphi_k (x) 
\end{equation}
and
\begin{eqnarray}\label{eq1.2}
	g_k(u)=\int_{0}^{u} \varphi_k(t)dt. 
\end{eqnarray}

\begin{lemma} (see \cite{Cagareishvili})\label{l1}
	Let $(\varphi_n)$ be an arbitrary ONS on $[0,1]$. Then
	$$
	\frac{1}{n^2}  \sum_{k=1}^n  \varphi_k^2 (x)=O(1)n^{-\frac{1}{2}}  \;\;\text{a.e. on $[0,1]$}.
	$$
\end{lemma}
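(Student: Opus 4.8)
The plan is to exploit only the normalization $\int_0^1 \varphi_k^2(x)\,dx = 1$ together with the positivity of the terms $\varphi_k^2(x)$, which lets me avoid any delicate cancellation argument. Write $\Phi_n(x)=\sum_{k=1}^n \varphi_k^2(x)$; since each summand is nonnegative, $\Phi_n$ is nondecreasing in $n$ and $\int_0^1 \Phi_n(x)\,dx = n$.

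First I would pass to the weighted series $\sum_{k=1}^\infty k^{-3/2}\varphi_k^2(x)$. Because its terms are nonnegative and measurable, the monotone convergence theorem (Beppo Levi) permits term-by-term integration:
\[
\int_0^1 \sum_{k=1}^\infty \frac{\varphi_k^2(x)}{k^{3/2}}\,dx = \sum_{k=1}^\infty \frac{1}{k^{3/2}} < \infty .
\]
Consequently the nonnegative function $\sum_{k=1}^\infty k^{-3/2}\varphi_k^2(x)$ is integrable, hence finite a.e., so the series $\sum_k k^{-3/2}\varphi_k^2(x)$ converges for almost every $x\in[0,1]$.

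Next I would apply Kronecker's lemma with the weights $b_k=k^{3/2}\uparrow\infty$ at each point of a.e. convergence. Since $\sum_k \varphi_k^2(x)/b_k$ converges there, Kronecker's lemma yields
\[
\frac{1}{n^{3/2}}\sum_{k=1}^n \varphi_k^2(x) \longrightarrow 0 \quad \text{a.e. on } [0,1].
\]
Factoring out $n^{-1/2}$ then gives
\[
\frac{1}{n^2}\sum_{k=1}^n \varphi_k^2(x) = n^{-1/2}\cdot \frac{1}{n^{3/2}}\sum_{k=1}^n \varphi_k^2(x) = o(1)\,n^{-1/2} \quad \text{a.e.},
\]
which is even slightly stronger than the asserted $O(1)\,n^{-1/2}$ bound.

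The only point requiring care is the passage from the integral estimate to the a.e. statement: the argument must rest on the positivity of $\varphi_k^2$, so that Beppo Levi applies and nothing is lost to cancellation, rather than on any convergence of the orthonormal series itself, which need not hold. As a fully elementary alternative to Kronecker's lemma, one could instead restrict to the subsequence $n_j=j^3$, use $\sum_j n_j^{-1/2}=\sum_j j^{-3/2}<\infty$ together with Beppo Levi to obtain $\Phi_{n_j}(x)=O(n_j^{3/2})$ a.e., and then interpolate to general $n$ via the monotonicity of $\Phi_n$ and the fact that $n_{j+1}/n_j\to 1$; I expect the Kronecker route to be the cleanest to write out.
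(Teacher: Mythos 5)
Your proof is correct. There is nothing in the paper itself to compare it against: Lemma \ref{l1} is stated with a citation to \cite{Cagareishvili} and no proof is reproduced here. Your argument is sound and self-contained: integrating the nonnegative series $\sum_{k} k^{-3/2}\varphi_k^2(x)$ term by term via Beppo Levi uses only $\int_0^1\varphi_k^2(x)\,dx=1$ and positivity, giving a.e. convergence of the weighted series, and your application of Kronecker's lemma (with $x_k=k^{-3/2}\varphi_k^2(x)$ and $b_k=k^{3/2}$) is legitimate and yields the stronger conclusion $\frac{1}{n^2}\sum_{k=1}^n\varphi_k^2(x)=o(1)\,n^{-1/2}$ a.e. One simplification worth noting: once you know $C(x):=\sum_{k=1}^{\infty}k^{-3/2}\varphi_k^2(x)<\infty$ a.e., Kronecker's lemma can be bypassed entirely, since for $k\le n$ one has $\varphi_k^2(x)=k^{3/2}\cdot k^{-3/2}\varphi_k^2(x)\le n^{3/2}\,k^{-3/2}\varphi_k^2(x)$, whence $\sum_{k=1}^n\varphi_k^2(x)\le n^{3/2}C(x)$; dividing by $n^2$ gives exactly the claimed $O(1)\,n^{-1/2}$ bound, and this majorization is the standard argument in this literature. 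Your elementary subsequence alternative (taking $n_j=j^3$ and interpolating by monotonicity of $\Phi_n$) is also valid, but it is more work than either of the other two routes.
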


\textbf{Notation.} Let $G=[0,1] \setminus F,$ where 
\begin{eqnarray*}
	F = \left \{x\in[0,1] \left| \lim\limits_{n\to \infty}\frac{1}{n^2} \sum_{k=1}^{n} \varphi_k^2 (x) = \infty\right. \right \}.
\end{eqnarray*}
It is easy to show that
$$\left|F\right|=0 \text{  and  } \left|G\right|=1.$$

\begin{lemma}(see \cite{Cagareishvili})\label{l2}
	Suppose that
	\begin{equation}\label{eq5}
		B_n (u,x)=\sum_{k=1}^n  \varphi_k (u) \varphi_k (x).
	\end{equation}
	If for some $x\in [0,1]$
	\begin{equation}\label{eq6}
		\limsup_{n\to \infty} \bigg\vert \int_0^1  B_n (u,x)\,du\bigg\vert =+\infty,
	\end{equation}
	then for the function $q(u)=1$ $(u\in [0,1])$
	\begin{eqnarray*}
		\limsup_{n\to \infty} \vert S_n (x,q)\vert =+\infty ,
	\end{eqnarray*}
	where
	\begin{eqnarray*}
		S_n(x,q)=\sum_{k=1}^{n} C_k(q)\varphi_{k}(x) \text{ and } C_k(g)=\int_{0}^{1}q(u)\varphi_{k}(u)du = \int_{0}^{1} \varphi_{n}(u)du.
	\end{eqnarray*}

\end{lemma}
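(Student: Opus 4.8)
My plan is to observe that the two quantities appearing in the hypothesis and in the conclusion are in fact \emph{identical} for every $n$, so that the lemma reduces to a single line of computation. Concretely, I would start from the definition \eqref{eq5} of $B_n(u,x)=\sum_{k=1}^n \varphi_k(u)\varphi_k(x)$ and integrate in $u$ over $[0,1]$. Since the sum is finite, linearity of the integral lets me interchange summation and integration with no convergence concerns, giving
\begin{equation*}
	\int_0^1 B_n(u,x)\,du=\sum_{k=1}^n \varphi_k(x)\int_0^1 \varphi_k(u)\,du.
\end{equation*}

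The next step is simply to recognize the inner integral. Because $q(u)\equiv 1$ on $[0,1]$, the Fourier coefficient of $q$ is exactly $C_k(q)=\int_0^1 q(u)\varphi_k(u)\,du=\int_0^1 \varphi_k(u)\,du$, which is the very quantity appearing in the last displayed line of the statement. Substituting this identity back yields
\begin{equation*}
	\int_0^1 B_n(u,x)\,du=\sum_{k=1}^n C_k(q)\,\varphi_k(x)=S_n(x,q),
\end{equation*}
so the integral of $B_n(\cdot,x)$ and the partial sum $S_n(x,q)$ agree term by term.

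It then follows at once that $\bigl\vert \int_0^1 B_n(u,x)\,du\bigr\vert=\vert S_n(x,q)\vert$ for every $n$ and every fixed $x\in[0,1]$, whence the two sequences have the same $\limsup$. In particular, for any $x$ satisfying the hypothesis \eqref{eq6}, the right-hand side also diverges, i.e. $\limsup_{n\to\infty}\vert S_n(x,q)\vert=+\infty$, which is the desired conclusion.

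I do not anticipate a genuine obstacle here: the statement is a formal identity once one notices that integrating the reproducing-type kernel $B_n$ against the constant function produces exactly the Fourier partial sum of that constant function. The only point requiring a word of care is the interchange of the finite sum with the integral, and this is justified purely by linearity; no limiting argument, Fubini hypothesis, or orthonormality beyond the definition of $C_k(q)$ is needed.
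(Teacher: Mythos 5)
Your proposal is correct and coincides with the paper's own proof: both integrate $B_n(u,x)$ in $u$, interchange the finite sum with the integral, identify $\int_0^1\varphi_k(u)\,du$ as $C_k(q)$, and conclude that $\int_0^1 B_n(u,x)\,du = S_n(x,q)$ for every $n$, so the two $\limsup$'s agree. Nothing is missing; this is exactly the argument given in the paper.
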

\begin{proof}
	Indeed, as
	$$C_n(q) = \int_{0}^{1}\varphi_{n}(u)du $$
	we get
	\begin{eqnarray*}
		\int_{0}^{1} B_n(u,x)du = \sum_{k=1}^{n}\int_{0}^{1}\varphi_{k}(u)du \varphi_{k}(x)=\sum_{k=1}^{n} C_k(q)  \varphi_{k}(x) = S_n(x,q).
	\end{eqnarray*}
	From the last equality and \eqref{eq6} we conclude that Lemma \ref{l2} is valid.
	
\end{proof}

\begin{lemma} (see \cite{Cagareishvili}) \label{l3}
	For any $i$ $(i=1,\dots,n)$ and $x\in [0,1]$ 
	$$
	\int_{\frac{i-1}{n}}^{\frac{i}{n}} \vert Q_n(u,x)\vert \,du\leq \frac{1}{n} \bigg(\sum_{k=1}^n \varphi_k^2(x)\bigg)^{\frac{1}{2}}
	$$
	holds (see \eqref{eq4}).
\end{lemma}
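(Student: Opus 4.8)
The plan is to separate the two variables and then exploit that the functions $g_k$ are themselves Fourier coefficients of an indicator function. First I would fix $u$ and $x$ and apply the Cauchy--Schwarz inequality to the finite sum defining $Q_n(u,x)$ in \eqref{eq4}, treating the index $k$ as the summation variable. This yields the pointwise estimate
$$
|Q_n(u,x)| = \left|\sum_{k=1}^n g_k(u)\varphi_k(x)\right| \le \left(\sum_{k=1}^n g_k^2(u)\right)^{1/2}\left(\sum_{k=1}^n \varphi_k^2(x)\right)^{1/2},
$$
which already isolates the $x$-dependence into a factor that is constant with respect to the integration variable $u$.

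The key step is to control the $u$-factor uniformly. Recalling \eqref{eq1.2}, I would observe that
$$
g_k(u) = \int_0^u \varphi_k(t)\,dt = \int_0^1 \chi_{[0,u]}(t)\,\varphi_k(t)\,dt,
$$
so that $g_k(u)$ is precisely the $k$-th Fourier coefficient of the characteristic function $\chi_{[0,u]}$ with respect to the ONS $(\varphi_n)$. Bessel's inequality then gives
$$
\sum_{k=1}^n g_k^2(u) \le \sum_{k=1}^\infty g_k^2(u) \le \|\chi_{[0,u]}\|_{L_2}^2 = \int_0^u dt = u \le 1
$$
for every $u \in [0,1]$.

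Finally, I would combine the two bounds: on the interval $[\frac{i-1}{n}, \frac{i}{n}]$ the factor $(\sum_{k=1}^n \varphi_k^2(x))^{1/2}$ is independent of $u$ and can be pulled out of the integral, while $(\sum_{k=1}^n g_k^2(u))^{1/2} \le 1$. Since the interval has length $1/n$, it follows that
$$
\int_{\frac{i-1}{n}}^{\frac{i}{n}} |Q_n(u,x)|\,du \le \left(\sum_{k=1}^n \varphi_k^2(x)\right)^{1/2} \int_{\frac{i-1}{n}}^{\frac{i}{n}} 1\, du = \frac{1}{n}\left(\sum_{k=1}^n \varphi_k^2(x)\right)^{1/2},
$$
which is the assertion. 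The only genuinely nontrivial point is the recognition that the $g_k(u)$ are Fourier coefficients of $\chi_{[0,u]}$, which lets Bessel's inequality furnish the uniform bound $\sum_{k=1}^n g_k^2(u)\le 1$; the remaining estimates are routine applications of Cauchy--Schwarz and of the trivial bound on an interval of length $1/n$.
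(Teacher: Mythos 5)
Your proof is correct and uses exactly the same two ingredients as the paper's: Cauchy--Schwarz applied to the sum $\sum_{k=1}^n g_k(u)\varphi_k(x)$, and Bessel's inequality giving $\sum_{k=1}^n g_k^2(u)\le \int_0^u dt\le 1$ (the paper's \eqref{eq2.7}, which is precisely your observation that the $g_k(u)$ are Fourier coefficients of $\chi_{[0,u]}$). The only immaterial difference is the order of operations: you apply Cauchy--Schwarz pointwise in $u$ and then integrate the resulting constant bound over the interval of length $1/n$, whereas the paper first applies Cauchy--Schwarz to the integral $\int_{\frac{i-1}{n}}^{\frac{i}{n}}|Q_n|\,du$ and only then to the sum inside the $L_2$ norm; both routes yield the identical estimate.
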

\begin{proof}
	By using \eqref{eq1.2} and Bessel inequality,
	\begin{eqnarray}
		\label{eq2.7}
		\sum_{k=1}^{\infty} g_k^2(u) = 	\sum_{k=1}^{\infty} \left(\int_{0}^{u} \varphi_k(t)dt\right)^2 \leq \int_{0}^u dt \leq 1	\end{eqnarray}
	where $u \in [0,1].$ Using \eqref{eq2.7}, the Cauchy and H\"older inequalities, we get $\left(i=1, 2, \dots , n\right)$ 
	\begin{eqnarray*}
		\int_{\frac{i-1}{n}}^{\frac{i}{n}} \vert Q_n(u,x)\vert \,du &\leq& \frac{1}{\sqrt{n}} \left(	\int_{\frac{i-1}{n}}^{\frac{i}{n}} Q_n^2(u,x) \,du\right)^{\frac{1}{2}} = \frac{1}{\sqrt{n}} \left(	\int_{\frac{i-1}{n}}^{\frac{i}{n}} \left(\sum_{k=1}^{n}g_k(u)\varphi_{k}(x)\right)^2 \,du\right)^{\frac{1}{2}} \\
		&\leq&	\frac{1}{\sqrt{n}} \left(	\int_{\frac{i-1}{n}}^{\frac{i}{n}} \sum\limits_{k=1}^{n}g_k^2(u)du \sum_{k=1}^n\varphi_{k}^2(x) \right)^{\frac{1}{2}} \leq \frac{1}{n}\left(\sum_{k=1}^n\varphi_{k}^2(x)\right)^{\frac{1}{2}}.
	\end{eqnarray*}
	The Lemma \ref{l3} is proved.
	
\end{proof}

\begin{lemma}(see \cite{Cagareishvili}) \label{l4}
	Let $(\varphi_n )$ be an ONS on $[0,1]$ and $f\in C_L$, then
	\begin{equation}\label{eq8}
		S_n (x,f)=f(1) \int_0^1  B_n (u,x)\,du-\int_0^1  f' (x) Q_n (u,x)\,du  .
	\end{equation}
\end{lemma}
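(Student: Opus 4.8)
The plan is to derive the identity \eqref{eq8} by starting from the definition of the partial sum \eqref{eq2.2} and expressing each Fourier coefficient $C_k(f)$ in terms of the antiderivatives $g_k$ via integration by parts. Concretely, since $f \in C_L$ means $f$ is continuously differentiable with $f' \in Lip1$, I would write
\begin{equation*}
C_k(f)=\int_0^1 f(x)\varphi_k(x)\,dx
\end{equation*}
and integrate by parts, using $g_k(u)=\int_0^u \varphi_k(t)\,dt$ from \eqref{eq1.2} as the antiderivative of $\varphi_k$. Since $g_k(0)=0$ and $g_k(1)=\int_0^1\varphi_k$, the boundary terms produce $f(1)g_k(1)$, and the resulting identity reads
\begin{equation*}
C_k(f)=f(1)\int_0^1\varphi_k(u)\,du-\int_0^1 f'(u)g_k(u)\,du.
\end{equation*}

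Next I would multiply this expression for $C_k(f)$ by $\varphi_k(x)$ and sum over $k=1,\dots,n$. The first term, after summation, becomes $f(1)\sum_{k=1}^n\left(\int_0^1\varphi_k(u)\,du\right)\varphi_k(x)$, which is exactly $f(1)\int_0^1 B_n(u,x)\,du$ by the definition of $B_n$ in \eqref{eq5}, since interchanging the finite sum with the integral gives $\sum_{k=1}^n\varphi_k(u)\varphi_k(x)=B_n(u,x)$. The second term becomes $\sum_{k=1}^n\left(\int_0^1 f'(u)g_k(u)\,du\right)\varphi_k(x)$; interchanging the finite sum and the integral here and recalling the definition of $Q_n$ in \eqref{eq4}, namely $Q_n(u,x)=\sum_{k=1}^n g_k(u)\varphi_k(x)$, collapses this into $\int_0^1 f'(u)Q_n(u,x)\,du$. (I note the statement writes $f'(x)$ inside the integral, but the integration variable is $u$, so this should read $f'(u)$; I would write it consistently as $f'(u)$.) Combining the two pieces yields precisely \eqref{eq8}.

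The only genuine subtlety, and the step I would treat most carefully, is the justification of the integration by parts: I must confirm that $f$ is absolutely continuous so that the fundamental theorem of calculus applies and the boundary evaluation is legitimate. This is guaranteed by $f\in C_L$, since $f'$ exists and is Lipschitz (hence $f$ is $C^1$ on $[0,1]$), so integration by parts against the bounded measurable function $\varphi_k$ is valid with no technical obstruction. Interchanging finite sums with integrals is automatic. Therefore the proof is essentially a one-line integration by parts per coefficient followed by bookkeeping, and I would present it as such, emphasizing only that the hypothesis $f\in C_L$ is what licenses the boundary term $f(1)g_k(1)$ and the absence of a boundary term at $0$ coming from $g_k(0)=0$.
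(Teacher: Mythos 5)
Your proposal is correct and follows essentially the same route as the paper: integration by parts on each coefficient $C_k(f)$ using the antiderivative $g_k$ (with $g_k(0)=0$ killing the lower boundary term), then summing against $\varphi_k(x)$ and recognizing $B_n$ and $Q_n$. You also rightly flag that the $f'(x)$ in \eqref{eq8} is a typo for $f'(u)$, which is confirmed by the paper's own derivation in \eqref{eq9}.
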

\begin{proof}
	Integrating by parts, we obtain
	$$
	C_n (f)=\int_0^1  f(u) \varphi_k (u)\,du =f(1) \int_0^1  \varphi_k (u)\,du -\int_0^1  f' (u) g_n (u)\,du .
	$$
	Therefore,
	\begin{align}
		S_n (x,f) & =\sum_{k=1}^n  C_k (f)\varphi_k (x) \nonumber \\
		& =f(1) \int_0^1 \sum_{k=1}^n  \varphi_k (u) \varphi_k (x)\,du -\int_0^1  f'(u)\sum_{k=1}^n  g_k (u)\varphi_k (x)   \,du \nonumber \\
		& =f(1) \int_0^1  B_n (u,x)\,du-\int_0^1  f' (u) Q_n (u,x)\,du   . \label{eq9}
	\end{align}
	From \eqref{eq9} we derive \eqref{eq8}.
	
	The Lemma \ref{l4} is proved.
	
\end{proof}

\begin{lemma} \label{l5}
	Suppose that $q\in E(\varphi)$ $(q(u)=1, \ u\in [0,1])$ and
	\begin{eqnarray*}
		Q_n(u,x) = \sum_{k=1}^{n} g_k(u) \varphi_k(x).
	\end{eqnarray*}
	If for some $x \in [0,1]$
	\begin{eqnarray}
		\label{eq2.10}
		\limsup_{n \to \infty} \left|\int_{0}^{1} Q_n(u,x)du\right| = + \infty,
	\end{eqnarray}
	then for the function $p(u)=u$ $(u \in[0,1])$
	\begin{eqnarray*}
		\limsup_{n \to \infty} \left|S_n(x,p)\right| = + \infty.
	\end{eqnarray*}
\end{lemma}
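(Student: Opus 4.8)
The plan is to relate the partial sums $S_n(x,p)$ of $p(u)=u$ to the integral $\int_0^1 Q_n(u,x)\,du$ occurring in hypothesis \eqref{eq2.10}, in exact analogy with the way Lemma \ref{l2} identifies $S_n(x,q)$ with $\int_0^1 B_n(u,x)\,du$. The whole lemma should reduce to a single algebraic identity between these three objects, after which the conclusion follows by the triangle inequality.

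First I would compute the Fourier coefficients of $p$. Integrating by parts precisely as in the proof of Lemma \ref{l4}, and recalling from \eqref{eq1.2} that $g_k(u)=\int_0^u\varphi_k(t)\,dt$, one obtains
$$C_k(p)=\int_0^1 u\,\varphi_k(u)\,du=\big[u\,g_k(u)\big]_0^1-\int_0^1 g_k(u)\,du=g_k(1)-\int_0^1 g_k(u)\,du.$$
Next I multiply by $\varphi_k(x)$ and sum over $k=1,\dots,n$. Here two bookkeeping observations do the work: since $g_k(1)=\int_0^1\varphi_k(u)\,du=C_k(q)$ for the constant function $q\equiv 1$, the first block of terms is $\sum_{k=1}^n g_k(1)\varphi_k(x)=S_n(x,q)$; and by linearity, interchanging sum and integral, the second block is $\sum_{k=1}^n\big(\int_0^1 g_k(u)\,du\big)\varphi_k(x)=\int_0^1 Q_n(u,x)\,du$ in view of \eqref{eq4}. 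This yields the key identity
$$S_n(x,p)=S_n(x,q)-\int_0^1 Q_n(u,x)\,du,\qquad\text{equivalently}\qquad \int_0^1 Q_n(u,x)\,du=S_n(x,q)-S_n(x,p).$$

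The hypothesis $q\in E(\varphi)$ is then exactly what is needed to finish: membership in $E(\varphi)$ guarantees that the sequence $\big(S_n(x,q)\big)_n$ stays bounded (indeed converges) at the point $x$ under consideration. Combining this with the identity above and the elementary estimate
$$\big|S_n(x,p)\big|\ \geq\ \bigg|\int_0^1 Q_n(u,x)\,du\bigg|-\big|S_n(x,q)\big|,$$
the assumption \eqref{eq2.10} that $\limsup_{n\to\infty}\big|\int_0^1 Q_n(u,x)\,du\big|=+\infty$ forces $\limsup_{n\to\infty}\big|S_n(x,p)\big|=+\infty$, which is the assertion.

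I expect the main obstacle to be conceptual rather than computational: the identity by itself is inconclusive, because a priori both $S_n(x,q)$ and $\int_0^1 Q_n(u,x)\,du$ could diverge and cancel. The argument is rescued only by the boundedness of $S_n(x,q)$, so the crux is to invoke the defining property of the class $E(\varphi)$ correctly and confirm it delivers that boundedness at the relevant $x$. The integration-by-parts step and the interchange of summation and integration are routine and parallel the earlier lemmas.
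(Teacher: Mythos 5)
Your proposal is correct and takes essentially the same route as the paper: the paper obtains your key identity $S_n(x,p)=S_n(x,q)-\int_0^1 Q_n(u,x)\,du$ by specializing the integration-by-parts formula \eqref{eq9} of Lemma \ref{l4} to $f(u)=u$, and then, exactly as you do, invokes $q\in E(\varphi)$ to bound $\int_0^1 B_n(u,x)\,du=S_n(x,q)$ and concludes via the triangle inequality. The only nit is your parenthetical remark that $E(\varphi)$-membership gives convergence of $S_n(x,q)$ --- Definition \ref{d2} guarantees only boundedness --- but boundedness is all your argument actually uses, so nothing is affected.
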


\begin{proof}
	In equation \eqref{eq9} we suppose that $f = p = u$ $(u \in  [0, 1]),$ we receive
	\begin{eqnarray}
		S_n (x,h) = \int_0^1  B_n (u,x)\,du-\int_0^1 Q_n (u,x)\,du   . \label{eq2.11}
	\end{eqnarray}
	According to the condition of this lemma
	\begin{eqnarray*}
		\left|\int_0^1  B_n (u,x)\,du\right| = O(1).
	\end{eqnarray*}
	Consequently, using \eqref{eq2.10} and \eqref{eq2.11} we get
	\begin{eqnarray*}
		\limsup_{n \to \infty} \left|S_n(x,f)\right| = + \infty.
	\end{eqnarray*}
	Lemma \ref{l5} is proved.	
	
\end{proof}

\begin{definition}\label{d2}
	Let $E(\varphi )$ be a set of any functions $f$ with
	\begin{equation}\label{eq10}
		\vert S_n (x,f)\vert =O(1)
	\end{equation}
	at the point $x\in [0,1]$.
	\\
	
\end{definition}

\begin{thm}[S. Banach \cite{Banach}] \label{tA}
	For any $f\in L_2$ $(f\not\sim 0)$ there exists an ONS $(\varphi_n )$ such that
	\begin{eqnarray*}
		\limsup_{n\to \infty} \vert   S_n (x,f)\vert =+\infty \ \ \ \text{a.e. on } [0,1].
	\end{eqnarray*}	
	
\end{thm}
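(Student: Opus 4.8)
The plan is to exploit the freedom we have in choosing the \emph{entire} system $(\varphi_n)$ once $f$ is fixed. Normalising, I may assume $\|f\|_{L_2}=1$ and set $e_0=f$, then extend $e_0$ to a complete orthonormal basis $(e_j)_{j\ge 0}$ of $L_2[0,1]$. Any ONS $(\varphi_k)$ may be written as $\varphi_k=c_k e_0+w_k$ with $w_k\perp e_0$ and $c_k=\langle f,\varphi_k\rangle=C_k(f)$. Orthonormality of $(\varphi_k)$ is equivalent to the Gram relations $\langle w_j,w_k\rangle=\delta_{jk}-c_jc_k$, and the matrix $(\delta_{jk}-c_jc_k)$ is realisable by vectors of the infinite-dimensional space $e_0^{\perp}$ precisely when $\sum_k c_k^2\le 1$ (this is exactly Bessel's inequality). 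Substituting the decomposition into \eqref{eq2.2} gives the identity
\begin{equation*}
S_n(x,f)=\Big(\sum_{k=1}^n c_k^2\Big)f(x)+\sum_{k=1}^n c_k w_k(x).
\end{equation*}
Since $\sum_k c_k^2\le 1$ and $f$ is finite a.e., the first term converges a.e.; hence it suffices to choose the coefficients $(c_k)$ and functions $(w_k)$, subject to the two constraints above, so that
\begin{equation*}
T_n(x):=\sum_{k=1}^n c_k w_k(x)\ \ \text{is unbounded for a.e. } x.
\end{equation*}

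This is a Menshov-type problem: build a nearly orthonormal system with square-summable coefficients whose partial sums diverge unboundedly a.e. One cannot force $T_n$ itself to be large on most of $[0,1]$ through a single block, because if a block uses indices $k$ in a set $\Delta_m$ then $\|\sum_{k\in\Delta_m}c_kw_k\|_{L_2}^2\approx\sum_{k\in\Delta_m}c_k^2$ is small, the total energy being at most $1$. The unboundedness must instead be produced by the \emph{maximal} partial sums inside a block becoming large while the block's net contribution stays small — the cancellation mechanism at the core of Menshov's construction. Concretely, for each $m$ one constructs a finite orthonormal packet $(\psi_k)_{k\in\Delta_m}$ on $[0,1]$ with block energy $\sigma_m^2:=\sum_{k\in\Delta_m}c_k^2\to 0$ such that
\begin{equation*}
\max_{p}\Big|\sum_{k\in\Delta_m,\ k\le p}c_k\psi_k(x)\Big|\ge \lambda_m \quad\text{on a set } E_m \text{ with } |E_m|\ge 1-\eta_m,
\end{equation*}
where $\lambda_m\to\infty$ and $\sum_m\eta_m<\infty$; such packets exist because the maximal partial sum operator of a general ONS admits no bound in terms of the $L_2$-norm of the expansion.

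One then concatenates these packets (arranging $\sum_m\sigma_m^2\le 1$) and transplants them into $e_0^{\perp}=f^{\perp}$: replacing each $\psi_k$ by its projection onto $f^{\perp}$ and re-orthonormalising yields admissible $w_k$, the corrections being of size $O(c_k)=o(1)$ and hence too small to destroy the blow-up. By the Borel--Cantelli lemma, since $\sum_m\eta_m<\infty$, almost every $x$ lies in $E_m$ for infinitely many $m$, so the maximal partial sums of $T_n$ exceed $\lambda_m\to\infty$ infinitely often. Therefore $T_n(x)$, and with it $S_n(x,f)$, is unbounded for a.e. $x$, which is the assertion.

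The main obstacle is the packet construction itself: producing, for each prescribed height $\lambda_m$ and tolerance $\eta_m$, a finite orthonormal family whose maximal partial sums reach $\lambda_m$ on a set of measure at least $1-\eta_m$ while its coefficients carry only vanishing energy $\sigma_m^2$. This quantitative core is essentially Menshov's lemma on the growth of the Lebesgue functions of orthonormal systems. By contrast, the reduction to $f^{\perp}$ via the displayed identity and the final Borel--Cantelli step are routine, once the packets and the $O(c_k)$ control on the transplantation are available.
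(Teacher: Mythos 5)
First, a point of context: the paper itself does not prove this statement --- Theorem A is quoted from Banach's monograph \cite{Banach} as a known classical result --- so your attempt has to be judged on its own merits rather than against a proof in the text. Your reduction is correct and genuinely nice: with $\|f\|_{L_2}=1$, writing $\varphi_k=c_ke_0+w_k$, the equivalence of orthonormality with the Gram relations $\langle w_j,w_k\rangle=\delta_{jk}-c_jc_k$, the realizability criterion $\sum_k c_k^2\le 1$, and the identity $S_n(x,f)=\bigl(\sum_{k\le n}c_k^2\bigr)f(x)+T_n(x)$ are all sound, and they correctly reduce Banach's theorem to producing a family in $f^{\perp}$, with prescribed Gram matrix and square-summable coefficients, whose partial sums are a.e.\ unbounded. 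Citing Menshov's lemma for the packets is also legitimate, and you rightly identify it as the quantitative core.

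The genuine gap is the transplantation/concatenation step, in two respects. (1) The packets $(\psi_k)_{k\in\Delta_m}$ supplied by Menshov's lemma for different $m$ are separate finite ONS on $[0,1]$; nothing makes functions from different blocks mutually orthogonal, so ``concatenating'' them does not produce a system with controllable Gram matrix, and re-orthonormalizing across blocks (Gram--Schmidt) would mix the functions and destroy the pointwise blow-up. In the standard proof of Menshov's divergence theorem this is handled inside the construction (each new packet is built, e.g.\ by refinement with sign flips, to be exactly orthogonal to everything measurable with respect to the previous blocks' partition); your sketch silently assumes it. (2) The estimate ``corrections of size $O(c_k)$'' is not correct: projecting $\psi_k$ onto $f^{\perp}$ changes it by $\langle\psi_k,f\rangle f$, and $\gamma_k:=\langle\psi_k,f\rangle$ has no relation to $c_k$ --- it is only square-summable by Bessel. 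Moreover the projected block has Gram matrix $\delta_{jk}-\gamma_j\gamma_k$ whose smallest eigenvalue is $1-\sum_{k\in\Delta_m}\gamma_k^2$; if $f$ lies in, or close to, the span of a packet, this degenerates and the re-orthonormalization map blows up, so ``too small to destroy the blow-up'' cannot be concluded as stated. The step is fixable, but it needs a real argument: e.g.\ first replace each packet by a rescaled periodization with balanced signs, $\psi_k''(x)=\pm\,\psi_k(\{Mx\})$, which preserves orthonormality and the measure of the blow-up set, is exactly orthogonal to the earlier (coarser) blocks, and makes $\sum_{k\in\Delta_m}\langle\psi_k'',f\rangle^2=O(1/M)$ small; only then are the projection and Gram-correction terms summable a.e.\ and the Borel--Cantelli conclusion goes through. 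As written, the proposal is a plausible architecture with the decisive technical step missing.
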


\begin{theo}[see \cite{2}] \label{tB}
	If   $f, F\in L_2$, then
	\begin{align}
		\int_0^1  f'(x)F(x)\,dx & =n\sum_{i=1}^{n-1}  \int_{\frac{i-1}{n}}^{\frac{i}{n}} \left(f(x)-f\left(x+\frac{1}{n} \right)\right)\,dx \int_0^{\frac{i}{n}}  F(x)\,dx \nonumber \\
		& \qquad  +   n\sum_{i=1}^{n-1}  \int_{\frac{i-1}{n}}^{\frac{i}{n}} \int_{\frac{i-1}{n}}^{\frac{i}{n}} \left(f(x)-f(u)\right)\,duF(x)\,dx \nonumber \\
		& \qquad  +n\int_{1-\frac{1}{n}}^1 f(x)\,dx\int_0^1  F(x)\,dx.\label{eq11}
	\end{align}
\end{theo}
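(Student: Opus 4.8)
The plan is to prove \eqref{eq11} as an exact summation-by-parts identity on the uniform partition $I_i=[\tfrac{i-1}{n},\tfrac{i}{n}]$, $i=1,\dots,n$, of $[0,1]$, treating $n$ as fixed (the asserted equality is for each $n$, so no limiting argument is needed). Throughout I would write $J_i=\int_{I_i}f(x)\,dx$, $K_i=\int_{I_i}F(x)\,dx$, and $\Phi(t)=\int_0^t F(x)\,dx$, so that $\int_0^{i/n}F=\Phi(i/n)=\sum_{j=1}^i K_j$ and $\Phi(1)=\sum_{j=1}^n K_j$. The whole calculation then becomes bookkeeping with the two families of numbers $J_i$ and $K_i$ plus the genuinely two-dimensional pieces coming from the oscillation of $f$ on each $I_i$.

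The first step is a per-interval separation of the mean of $f$ from its oscillation. The elementary identity $\int_{I_i}(f(x)-f(u))\,du=\tfrac1n f(x)-J_i$, after multiplying by $F(x)$ and integrating over $I_i$, yields
\[
 n\int_{I_i}\int_{I_i}\bigl(f(x)-f(u)\bigr)\,du\,F(x)\,dx=\int_{I_i}f(x)F(x)\,dx-nJ_iK_i .
\]
This is exactly the $i$-th summand of the double sum in \eqref{eq11}; summing it isolates the local product integrals $\int_{I_i}fF$ and collects the remaining ``diagonal'' contribution $n\sum_i J_iK_i$, which must be absorbed by the other two sums.

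The second, and main, step is the Abel summation of this diagonal term. Using the substitution $u=x+\tfrac1n$ one identifies $\int_{I_i}f(x+\tfrac1n)\,dx=J_{i+1}$, so the integrand difference in the first sum of \eqref{eq11} is precisely $J_i-J_{i+1}$ and that sum equals $n\sum_{i=1}^{n-1}(J_i-J_{i+1})\Phi(i/n)$. A discrete summation by parts, using $\Phi(i/n)-\Phi(\tfrac{i-1}{n})=K_i$ together with $\Phi(1)-\Phi(\tfrac{n-1}{n})=K_n$, collapses this telescoping sum; combined with the terminal term $n\int_{1-1/n}^1 f\,dx\cdot\int_0^1 F\,dx=nJ_n\Phi(1)$ it reproduces $n\sum_{i=1}^{n}J_iK_i$, thereby cancelling the diagonal quantity left over from the first step.

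Finally I would assemble the three pieces and check that the reconstituted expression matches the left-hand side, invoking the fundamental theorem of calculus to pass between the divided differences of $f$ appearing on the right and the integral of $f'$ on the left. I expect the delicate point to be precisely this endpoint bookkeeping in the Abel step: one has to track how the index range $i=1,\dots,n-1$ in the first two sums meshes with the single terminal interval $I_n$ carried by the third term, since that is where the boundary value $\Phi(1)$ and the factor $J_n$ enter, and a shift by one interval in the summation is exactly the kind of error that would disturb the identity. Verifying that the terminal interval is accounted for once, and only once, is the crux of the argument.
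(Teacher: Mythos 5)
First, a point of reference: the paper does not prove Theorem B at all --- it is quoted from \cite{2} --- so your proposal can only be judged on its own terms, and on those terms it has a genuine gap at exactly the step you deferred. Your two computational ingredients are correct: with $I_i=[\tfrac{i-1}{n},\tfrac{i}{n}]$, $J_i=\int_{I_i}f$, $K_i=\int_{I_i}F$, $\Phi(t)=\int_0^tF$, one indeed has $n\int_{I_i}\int_{I_i}(f(x)-f(u))\,du\,F(x)\,dx=\int_{I_i}fF\,dx-nJ_iK_i$, and Abel summation of the first sum plus the terminal term gives exactly $n\sum_{i=1}^{n}J_iK_i$. But when you assemble the pieces the cancellation does not close: the double sum in \eqref{eq11} runs only to $i=n-1$, so it contributes $\int_0^{1-1/n}fF\,dx-n\sum_{i=1}^{n-1}J_iK_i$, and the whole right-hand side collapses to $\int_0^{1-1/n}fF\,dx+nJ_nK_n$. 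This is a quantity of the type $\int fF$ (up to an endpoint error that is $o(1)$ but not zero), and no invocation of the fundamental theorem of calculus --- your final, unexplained step --- can turn it into $\int_0^1 f'F\,dx$. The identity as printed is in fact false: for $f(x)=x$ and $F\equiv 1$ the left side is $\int_0^1 1\,dx=1$, while the right side equals $-\tfrac{n-1}{2n}+0+\bigl(1-\tfrac{1}{2n}\bigr)=\tfrac12$ for every $n$.

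What is wrong is the statement rather than your bookkeeping, and your plan founders precisely on the two defects. As the theorem's use in \eqref{eq13} shows (there it is applied with $f$ replaced by $f'$, producing $\int_0^1 f'(u)Q_n(u,x)\,du$ on the left and differences of $f'$ on the right), the intended left-hand side of \eqref{eq11} is $\int_0^1 f(x)F(x)\,dx$, not $\int_0^1 f'(x)F(x)\,dx$; and for the identity to be exact for each fixed $n$ the second sum must run to $i=n$, as it does in the point-value analogue \eqref{eq23}. With those two corrections your argument closes immediately: the right-hand side becomes $\int_0^1 fF\,dx-n\sum_{i=1}^{n}J_iK_i+n\sum_{i=1}^{n}J_iK_i=\int_0^1 fF\,dx$, and the "terminal interval counted once and only once" worry you raised evaporates. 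So the concrete gap is this: your scheme proves (essentially) the correct identity for $\int_0^1 fF\,dx$, but the derivative $f'$ appearing on the stated left-hand side can never emerge from it, and the residue $nJ_nK_n-\int_{I_n}fF\,dx$ left by the truncated sum, which you correctly identified as the crux, is not resolvable as the statement stands.
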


\section{The main problems} \label{sec3}

From Theorem A it follows that even for function $g(x)=1$ $(x\in [0,1])$ there exists an ONS $(\varphi_n)$ such that
$$
\limsup_{n\to \infty} \vert S_n (x,g)\vert =+\infty
$$
a.e. on $[0,1]$.

\begin{theorem} \label{t2}
	Let $\left(\varphi_n\right)$ be an ONS on $[0, 1]$ and $p, q \in E\left(\varphi\right),$ where $p(u)=u, q(u)= 1,  u \in \left[0,1\right] .$
	If for $x \in G$
	$$M_n (x)=O(1),$$	
	then for any $f \in C_L,$
	$$S_n(x, f ) = O(1).$$
	
\end{theorem}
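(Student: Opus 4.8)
The plan is to start from the representation \eqref{eq8} of Lemma \ref{l4}, namely
$$S_n(x,f) = f(1)\int_0^1 B_n(u,x)\,du - \int_0^1 f'(u)\,Q_n(u,x)\,du,$$
and bound the two terms separately. For the first term I invoke the two membership hypotheses through the contrapositives of Lemmas \ref{l2} and \ref{l5}. Since $q(u)=1\in E(\varphi)$ means $|S_n(x,q)|=O(1)$, Lemma \ref{l2} rules out $\limsup_n|\int_0^1 B_n(u,x)\,du|=+\infty$, so $\int_0^1 B_n(u,x)\,du=O(1)$; as $f(1)$ is a fixed constant, the first term is $O(1)$. Likewise, $p(u)=u\in E(\varphi)$ together with Lemma \ref{l5} forces $\int_0^1 Q_n(u,x)\,du=O(1)$. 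Everything therefore reduces to controlling $\int_0^1 f'(u)\,Q_n(u,x)\,du$.

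The engine for that integral is Theorem \ref{tB}, applied with the given $f\in C_L$ and with $F(u)=Q_n(u,x)$, a finite linear combination of the continuous primitives $g_k$ and hence in $L_2$, taking the subdivision parameter in \eqref{eq11} equal to $n$. This writes the integral as $I_1+I_2+I_3$ with
$$I_1 = n\sum_{i=1}^{n-1}\int_{\frac{i-1}{n}}^{\frac{i}{n}}\Big(f(u)-f\big(u+\tfrac1n\big)\Big)du\int_0^{\frac{i}{n}}Q_n(v,x)\,dv,$$
$$I_2 = n\sum_{i=1}^{n-1}\int_{\frac{i-1}{n}}^{\frac{i}{n}}\int_{\frac{i-1}{n}}^{\frac{i}{n}}\big(f(u)-f(v)\big)dv\,Q_n(u,x)\,du,$$
and $I_3 = n\int_{1-\frac1n}^1 f(u)\,du\int_0^1 Q_n(v,x)\,dv$. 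The decisive observation is that each piece is matched to exactly one of the available estimates.

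I then estimate the three pieces. For $I_3$, continuity gives $n\int_{1-1/n}^1 f=O(1)$, and the second factor is the $\int_0^1 Q_n(v,x)\,dv=O(1)$ just obtained, so $I_3=O(1)$. For $I_1$, since $f\in C_L$ has a bounded derivative it is Lipschitz, so $|f(u)-f(u+1/n)|=O(1/n)$ and the inner integral over an interval of length $1/n$ is $O(1/n^2)$; hence
$$|I_1|\le \frac{C}{n}\sum_{i=1}^{n-1}\Big|\int_0^{\frac{i}{n}}Q_n(v,x)\,dv\Big| = C\,M_n(x)=O(1)$$
by the definition \eqref{eq3} and the standing hypothesis $M_n(x)=O(1)$; this is the step that consumes the main assumption. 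For $I_2$ the same Lipschitz bound $|f(u)-f(v)|\le L/n$ on each diagonal square gives $|I_2|\le \frac{C}{n}\sum_{i=1}^{n-1}\int_{\frac{i-1}{n}}^{\frac{i}{n}}|Q_n(u,x)|\,du$, and Lemma \ref{l3} bounds each summand by $\frac1n(\sum_{k=1}^n\varphi_k^2(x))^{1/2}$, so that $|I_2|\le \frac{C}{n}(\sum_{k=1}^n\varphi_k^2(x))^{1/2}$. For $x\in G$, Lemma \ref{l1} yields $\sum_{k=1}^n\varphi_k^2(x)=O(n^{3/2})$, whence $|I_2|=O(n^{-1/4})=o(1)$. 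Adding the three bounds to the first term finishes the proof.

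I expect the main obstacle to be the clean application of Theorem \ref{tB} and the recognition that its three summands correspond precisely to the hypothesis $M_n(x)=O(1)$, to the boundedness of $\int_0^1 Q_n$ coming from $p\in E(\varphi)$, and to the $L_2$-size estimates of Lemmas \ref{l1} and \ref{l3}. Among the computations, the $I_2$ bound is the most delicate, as it is the only place where the almost-everywhere growth control of $\sum_{k=1}^n\varphi_k^2(x)$, and hence the restriction $x\in G$, is genuinely used.
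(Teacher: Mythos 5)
Your proof has the same skeleton as the paper's (the representation \eqref{eq8}, boundedness of $\int_0^1 B_n(u,x)\,du$ and of $\int_0^1 Q_n(u,x)\,du$ extracted from $q,p\in E(\varphi)$, then a three-term decomposition via Theorem B estimated by $M_n(x)$, by Lemma \ref{l3} with $x\in G$, and by the boundary term), but your application of Theorem B is the point where the argument genuinely fails. You take \eqref{eq11} at face value: left-hand side $\int_0^1 f'(u)F(u)\,du$, right-hand side built from differences of $f$. That identity, as printed, is false; the prime on its left-hand side is a misprint. Test it with $f\equiv c$ constant and any $F$ with $\int_0^1 F\neq 0$: the left side is $0$, while both difference sums vanish and the last term equals $n\cdot\frac{c}{n}\int_0^1 F(u)\,du=c\int_0^1 F(u)\,du\neq 0$. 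The correct identity (the one that survives this test, and the one proved in \cite{2}) has $\int_0^1 f(u)F(u)\,du$ on the left. Consequently your decomposition $\int_0^1 f'(u)Q_n(u,x)\,du=I_1+I_2+I_3$, with $I_1,I_2$ formed from differences of $f$ and $I_3=n\int_{1-1/n}^1 f(u)\,du\int_0^1 Q_n(v,x)\,dv$, is not an identity for $\int_0^1 f'(u)Q_n(u,x)\,du$ at all: what your (correct) estimates actually bound is $\int_0^1 f(u)Q_n(u,x)\,du$, which is not the quantity appearing in \eqref{eq8}, so the conclusion $S_n(x,f)=O(1)$ does not follow.

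The paper's proof implicitly corrects the misprint by substituting $f\to f'$ into the identity: its left-hand side is then $\int_0^1 f'(u)Q_n(u,x)\,du$ and its right-hand side involves differences of $f'$, namely $f'(u)-f'(u+\frac{1}{n})$ and $f'(u)-f'(v)$, plus the boundary term $n\int_{1-1/n}^1 f'(u)\,du\int_0^1 Q_n(u,x)\,du$. This is exactly where the hypothesis $f\in C_L$ (i.e.\ $f'\in Lip1$) is consumed, since the $I_1$ and $I_2$ estimates require $|f'(u)-f'(u+\frac{1}{n})|=O(\frac{1}{n})$. The telltale symptom in your write-up is that you only ever use that $f$ itself is Lipschitz (equivalently, that $f'$ is bounded) and never that $f'$ is Lipschitz; if your argument were sound, the theorem would hold for every Lipschitz $f$, which this method cannot deliver --- with $f'$ merely bounded, the differences of $f'$ are only $O(1)$ and the first term would be of size $nM_n(x)$ rather than $M_n(x)$. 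The repair is exactly the paper's route: run your three estimates on the decomposition of $\int_0^1 f'(u)Q_n(u,x)\,du$ with $f'$ in place of $f$; they then go through verbatim. A secondary, minor point: in $I_2$ you invoke Lemma \ref{l1} at the fixed point $x\in G$, but Lemma \ref{l1} is an almost-everywhere statement and a point of $G$ need not satisfy its rate $O(n^{-1/2})$; what membership in $G$ provides, and all that either proof needs, is $\frac{1}{n^2}\sum_{k=1}^n\varphi_k^2(x)=O(1)$.
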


\begin{proof}
	If we substitute $F(x)=Q_n (u,x)$  and $f=f'$ in \eqref{eq11}, we obtain
	\begin{align}
		\int_0^1 f'(u)Q_n(u,x)\,du & =n \sum_{i=1}^{n-1} \int_{\frac{i-1}{n}}^{\frac{i}{n}} \left(f'(u)-f'\left(u+\frac{1}{n}\right)\right)\,du \int_0^{\frac{i}{n}}  Q_n(u,x)\,du \nonumber \\
		& \qquad +n\sum_{i=1}^{n-1}  \int_{\frac{i-1}{n}}^{\frac{i}{n}}  \int_{\frac{i-1}{n}}^{\frac{i}{n}} \left(f'(u)-f'(v)\right)\,dv \; Q_n (u,x)\,du  \nonumber \\ & \qquad +n\int_{1-\frac{1}{n}}^1 f'(u)\,du\; \int_0^1  Q_n (u,x)\,du \nonumber \\
		& =I_1+I_2+I_3. \label{eq13}
	\end{align}
	Due to \eqref{eq3} and the fact that $f'\in C_L$, we get $(\Delta _{in}=[\frac{i-1}{n},\frac{i}{n}])$
	\begin{align}\label{eq14}
		\vert  I_1 \vert &\leq  O\left(\frac{1}{n}\right) n \sum_{i=1}^{n-1} \int_{\Delta_{in}} \left|\int_{0}^{\frac{i}{n}}Q_n\left(u,x\right)du\right| dv \leq \frac{O(1)}{n} \sum_{i=1}^{n-1} \left|\int_{0}^{\frac{i}{n}}Q_n\left(u,x\right)du\right| \\
		& = O\left(1\right) M_n (x) = O(1). \notag
	\end{align}
	Next, since $f'\in C_L$ and $x\in G$, by using inequality  \eqref{eq2.7} from Lemma \ref{l3} and by using H\"older's and Parseval's $\left(\sum\limits_{k=1}^{n} g_k^2(u)\leq 1\right)$ inequalities, we have 
	\begin{align} \label{eq15} 
		\vert  I_2 \vert &\leq O\left(\frac{1}{n^2}\right) n \sum_{i=1}^{n} \int_{\Delta_{in}} \left|Q_n(u,x)\right|du =  O\left(1\right) \frac{1}{n}  \int_{0}^{1} \left| Q_n\left(u,x\right) \right| du \\
		&  = O(1) \frac{1}{n}  \left(\int_{0}^1 Q_n^2(u,x)du \right)^{\frac{1}{2}} = O(1) \frac{1}{n}  \left(\int_{0}^{1} \left(\sum_{k=1}^{n}g_k(u)  \varphi_k(x)\right)^2 du \right)^{\frac{1}{2}} \notag\\
		& =O(1) \frac{1}{n}  \left(\int_{0}^{1} \sum_{k=1}^{n}g_k^2(u)du \sum_{k=1}^{n} \varphi_k^2(x) \right)^{\frac{1}{2}}= O(1) \frac{1}{n}\left(\sum_{k=1}^{n} \varphi_k^2(x) \right)^{\frac{1}{2}}\notag\\
		&= O(1) \left(\frac{1}{n^2}\sum_{k=1}^{n} \varphi_k^2(x) \right)^{\frac{1}{2}} =O(1). \notag
	\end{align}
	Further, in \eqref{eq9} we suppose $f(u) = p(u) = u$ and $q(u)=1,$ $u \in [0,1].$ We receive 
	\begin{eqnarray*}
		S_n(x,p) = \int_{0}^{1}B_n(u,x)du-\int_{0}^{1}Q_n(u,x)du.
	\end{eqnarray*}
	From here as  
	\begin{eqnarray*}
		\int_{0}^{1}B_n(u,x)du=S_n(x,q).
	\end{eqnarray*}	
	we have  (see \eqref{eq5})  
	\begin{eqnarray*}
		S_n(x,p) = S_n(x,q) -\int_{0}^{1}Q_n(u,x)du.
	\end{eqnarray*}
	Consequently  if  $p, q \in E(\varphi),$ we have 
	\begin{eqnarray}\label{eq161}
		\vert  I_3 \vert &\leq& n\int_{1-\frac{1}{n}}^1  \vert  f'(u)\vert \,du \;\bigg\vert \int_0^1  Q_n (u,x)\,du\bigg\vert \\
		&\leq& n \ \frac{1}{n} \ \max_{i \in [0,1]} \vert  f'(u)\vert \bigg\vert \int_0^1  Q_n (u,x)\,du\bigg\vert = O(1). \notag
	\end{eqnarray}
	Taking the evaluations of $I_1,$ $I_2,$ and $I_3$ into account in \eqref{eq13} we conclude
	\begin{equation}\label{eq17}
		\bigg\vert  \int_0^1  f' (u) Q_n (u,x)\,du\bigg\vert  = O(1).
	\end{equation}
	Finally, according to the conditions of this theorem and \eqref{eq8}, we can deduce that
	$$ S_n (x,f) =O(1).$$
	Theorem \ref{t2} is completely proved.
	
\end{proof}

\begin{theorem}\label{t3}
	Let $(\varphi_n)$ be an ONS on $[0,1]$. If for some $t\in G$,
	$$
	\limsup_{n\to \infty} M_n (t)=+\infty,
	$$
	then there exists a function $r'\in C_L$ such that
	$$
	\limsup_{n\to \infty} \vert S_n (t,r)\vert =+\infty  .
	$$
\end{theorem}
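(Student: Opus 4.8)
The plan is to read $r\mapsto S_n(t,r)$ as a family of bounded linear functionals on a suitable Banach space of functions with $r''\in L_\infty$ (i.e. $r\in C_L$), to compute each functional norm exactly and bound it below by $M_n(t)$, and then to invoke the uniform boundedness principle to manufacture a single $r$ with $\sup_n|S_n(t,r)|=+\infty$. Since the conclusion is an existence statement, it suffices to search within any convenient subclass.

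First I would put $S_n(t,r)$ into a form in which $M_n$ appears transparently. Starting from Lemma \ref{l4} and integrating the last integral by parts once more, writing $W_n(u):=\int_0^u Q_n(s,t)\,ds$ (so $W_n(0)=0$, $W_n'=Q_n$, $W_n(1)=\int_0^1 Q_n(u,t)\,du$), I obtain
\[
S_n(t,r)=r(1)\int_0^1 B_n(u,t)\,du-r'(1)\int_0^1 Q_n(u,t)\,du+\int_0^1 r''(u)\,W_n(u)\,du .
\]
I would then restrict to the subspace $X_0$ of those $r\in C_L$ with $r(1)=r'(1)=0$. On $X_0$ the function $r$ is determined by $r''\in L_\infty$ together with the two boundary conditions, $\|r\|_{X_0}:=\|r''\|_\infty$ is a complete norm, and the identity collapses to the clean linear functional $\Lambda_n(r):=S_n(t,r)=\int_0^1 r''(u)W_n(u)\,du$. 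A pleasant feature of this representation is that it quietly removes the two hypotheses $p,q\in E(\varphi)$ used in Theorem \ref{t2}: the boundary terms carrying $\int_0^1 Q_n$ and $\int_0^1 B_n$ have been annihilated, which is exactly what one needs in Theorem \ref{t3}, where those hypotheses are absent.

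Next I would compute the functional norm. By $L_\infty$–$L_1$ duality, $\|\Lambda_n\|_{X_0^*}=\int_0^1|W_n(u)|\,du$, the extremal test being $r''=\operatorname{sgn}W_n$. The core of the argument is the comparison of this integral with the \emph{grid} quantity $M_n(t)=\tfrac1n\sum_{i=1}^{n-1}\bigl|W_n(i/n)\bigr|$. Testing $\Lambda_n$ against the step function equal to $\operatorname{sgn}\!\bigl(\int_0^{i/n}Q_n(u,t)\,du\bigr)$ on each $\Delta_{in}=[\tfrac{i-1}{n},\tfrac{i}{n}]$, and estimating the discrepancy between $\int_{\Delta_{in}}W_n$ and $\tfrac1n W_n(i/n)$ by the mean value theorem together with the a.e. bound $\sum_{k=1}^n\varphi_k^2(t)=O(n^{3/2})$ valid for $t\in G$ (Lemma \ref{l1}), I would arrive at
\[
\|\Lambda_n\|\ \ge\ M_n(t)-\tfrac1n\int_0^1|Q_n(u,t)|\,du\ \ge\ M_n(t)-O\!\left(n^{-1/4}\right).
\]
Hence $\|\Lambda_n\|\to\infty$ along the subsequence on which $M_n(t)\to\infty$, and the uniform boundedness principle on the Banach space $X_0$ produces $r\in X_0\subset C_L$ with $\limsup_n|S_n(t,r)|=+\infty$.

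The step I expect to be the main obstacle is precisely the regularity of the extremal test function: the sup in $\|\Lambda_n\|=\int_0^1|W_n|$ is attained at $r''=\operatorname{sgn}W_n$, which is merely bounded and discontinuous. This is why the argument is naturally at home in the class $C_L$, where $r''\in L_\infty$ is otherwise unconstrained. If one insisted on keeping $r''$ itself in $\mathrm{Lip}1$, the sign-matching would have to be smoothed at some scale $\delta$, forcing $[r'']_{\mathrm{Lip}}\sim 1/\delta$ while only recovering $\int_0^1|W_n|-O(\delta\int_0^1|Q_n|)$; optimizing in $\delta$ degrades the lower bound to order $M_n^2/n^{3/4}$, which diverges only under a polynomial growth of $M_n$. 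Thus the genuine difficulty is reconciling the demanded smoothness of $r$ with the oscillation of $W_n$ at the grid scale; the remaining care is bookkeeping the grid-versus-integral error, which is handled by the $G$-a.e. bound on $\sum_{k=1}^n\varphi_k^2(t)$.
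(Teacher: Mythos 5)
Your proposal is correct, and although it runs on the same engine as the paper --- sign-function test data, a grid-versus-integral comparison controlled at points of $G$ by Lemma \ref{l1}, and the Banach--Steinhaus theorem --- the functional-analytic packaging is genuinely different and somewhat tighter. The paper never integrates by parts a second time: it keeps the functionals $U_n(f)=\int_0^1 f(u)Q_n(u,t)\,du$ on the space $Lip1$, tests them against $f_n(u)=\int_0^u\operatorname{sign}W_n(y)\,dy$ (so $f_n'=\operatorname{sign}W_n$ and $\|f_n\|_{Lip1}\le 2$, in your notation $W_n(u)=\int_0^u Q_n(s,t)\,ds$), and proves $|U_n(f_n)|\ge M_n(t)-O(1)$ by a discrete Abel-type decomposition $S_1+S_2+S_3$, splitting the indices into the set $D_n$ where $\operatorname{sign}W_n$ changes inside $\Delta_{in}$ and its complement $F_n$; the contribution of $D_n$ is exactly your grid-versus-integral error and is absorbed by the same bound $\frac1n\bigl(\sum_{k=1}^n\varphi_k^2(t)\bigr)^{1/2}=O(1)$. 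Banach--Steinhaus on $Lip1$ then yields $h\in Lip1$ with $\limsup_n\bigl|\int_0^1 h(u)Q_n(u,t)\,du\bigr|=\infty$, and $r(u)=\int_0^u h(v)\,dv$ concludes via Lemma \ref{l4}. Because the paper's functional still carries the boundary terms, its proof must first dispose of the cases $\limsup_n\bigl|\int_0^1 B_n(u,t)\,du\bigr|=\infty$ and $\limsup_n\bigl|\int_0^1 Q_n(u,t)\,du\bigr|=\infty$ via Lemmas \ref{l2} and \ref{l5}, where $q(u)=1$ or $p(u)=u$ already witnesses the theorem; your restriction to $X_0=\{r\in C_L:\ r(1)=r'(1)=0\}$ annihilates both boundary terms at once, so this case analysis disappears, and $L^\infty$--$L^1$ duality gives you the exact norm $\|\Lambda_n\|=\int_0^1|W_n(u)|\,du$ rather than a one-sided estimate. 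What the paper's version buys in exchange is that it works on the off-the-shelf Banach space $Lip1$, whereas you must verify (easy, since $r\mapsto r''$ is an isometry of $X_0$ onto $L^\infty[0,1]$) that your bespoke norm is complete. Two harmless caveats: for $t\in G$ the definition of $G$ only yields the error term $O(1)$, and the sharper $O(n^{-1/4})$ requires the a.e. statement of Lemma \ref{l1} --- either suffices, since $\limsup_n M_n(t)=\infty$; and, exactly like the paper's own proof, you read the statement's ``$r'\in C_L$'' as ``$r\in C_L$'', i.e. $r'\in Lip1$, which is what both arguments actually deliver.
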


\begin{proof}
	Firstly, according to Lemma \ref{l2} and Lemma \ref{l5}
	\begin{eqnarray}
		\label{eq3.4}
		\left| \int_0^1 B_n (u,t)\,du\right| =O(1) \text{ and } 
		\left| \int_0^1 G_n (u,t)\,du\right| =O(1)
	\end{eqnarray}
	otherwise, Theorem \ref{t3} is proved.
	
	We defined the sequence of functions $(f_n)$ as follows:
	\begin{eqnarray} \label{eq22}
		f_n(u)=\int_{0}^{u} sign \int_{0}^{y} Q_n(v,t)dv dy, \ \ \ n=1,2, \dots .
	\end{eqnarray}
	In \eqref{eq161} we substitute $Q_n\left(u,x\right)=Q_n\left(u,t\right)$ and $f'(u)= f_n (u),$ then
	\begin{eqnarray}
		\label{eq23} \int_{0}^{1} f_n (u)Q_n (u, t) du
		&=& \sum_{i=1}^{n-1}\left(f_n\left(\frac{i}{n}\right)-f_n\left(\frac{i+1}{n}\right)\right)\int_{0}^{\frac{i}{n}}Q_n(u,t)du \notag   \\
		&+& \sum_{i=1}^{n}\int_{\frac{i-1}{n}}^{\frac{i}{n}}\left(f_n\left(u\right) -f_n\left(\frac{i}{n}\right)\right)Q_n(u,t) du    \\
		&+& f_n\left(1\right)\int_{0}^{1}Q_n(u,t)du=S_1 +S_2  +S_3.    \notag
	\end{eqnarray}
	By using \eqref{eq22}, Lemma \ref{l3} and  H\"older's and Cauchy's inequality, we receive (see $|I_2|$)
	\begin{eqnarray} \label{eq27}
		\left|S_2\right| &\leq& \frac{1}{n} \sum_{i=1}^{n} \int_{\frac{i-1}{n}}^{\frac{i}{n}}\left|Q_n(u,t)\right|du = \frac{1}{n}\int_{0}^{1}\left|Q_n(u,t)\right|du \\
		&\leq&  O(1) \frac{1}{n} \left(\int_{0}^{1}  \left(\sum_{k=1}^{n}g_k(u)\varphi_{k}(t)\right)^2 du\right)^{\frac{1}{2}} = O(1). \notag
	\end{eqnarray}
	
	Afterwards taking into account \eqref{eq3.4} and \eqref{eq22} we get
	\begin{eqnarray}
		\label{eq28}
		\left|S_3\right| = O(1)\left|\int_{0}^{1}Q_n(u,t)du\right| = O(1).
	\end{eqnarray}

	Let $D_n$ be a set of all $i$ $\left(i=1,2,...,n-1\right)$, for all of which, there exists a point $t\in\left[\frac{i-1}{n},\frac{i}{n}\right]$ such, that 
	\begin{equation}
		sign\int_{0}^{\frac{i}{n}}Q_n \left(u,t\right) du \ne sign\int_{0}^{t}Q_n \left(u,t\right) du. \label{2}
	\end{equation}
	
	Suppose that $i\in D_n.$ On account  continuity of  function  $\int_{0}^{t}Q_n \left(u,x\right) du$ on $[0, 1]$ for some $t_{i_n}\in\left[\frac{i}{n},\frac{i+1}{n}\right]$ we have
	\begin{equation*}
		\int_{0}^{t_{i_n}}Q_n \left(u,t\right) du=0.
	\end{equation*}
	Consequently,
	\begin{eqnarray}
		\left|\int_{0}^{\frac{i}{n}}Q_n \left(u,t\right) dx\right|= \left|\int_{0}^{t_{i_n}}Q_n \left(u,t\right) du +\int_{t_{i_n}}^{\frac{i}{n}}Q_n \left(u,t\right) du\right| \leq\int_{t_{i_n}}^{\frac{i}{n}} \left|Q_n \left(u,t\right)du\right|.   \notag
	\end{eqnarray}
	Further (see \eqref{eq2.7}), by using H\"older's inequality, we have
	\begin{eqnarray*}
		&& \sum_{i\in D_n}\left|\int_{0}^{\frac{i}{n}}Q_n \left(u, t\right)du \right| \leq	\sum_{i=1}^{i-1}\left|\int_{t_{i_n}}^{\frac{i}{n}}Q_n \left(u, t\right)du \right| \leq \int_{0}^{1}\left|Q_n \left(u,t\right)\right|du \\
		&& \qquad \leq \left(\int_{0}^{1}Q_{n}^{2} \left(u, t\right) du\right)^{\frac{1}{2}}=\left(\int_{0}^{1}\left(\sum_{k=1}^{n}g_k(u)\varphi_{k}(t)\right)^2 du\right)^{\frac{1}{2}}  \\
		&& \qquad = \left(\int_{0}^{1}\sum_{k=1}^{n}g_k^2(u) du \sum_{k=1}^{n}\varphi_{k}^2(t) \right)^{\frac{1}{2}} \leq \left(\sum_{k=1}^{n}\varphi_{k}^2(t) \right)^{\frac{1}{2}}.
	\end{eqnarray*}
	Then (see Lemma \ref{l1})
	\begin{eqnarray}
		\frac{1}{n} \sum_{i\in D_n}\left|\int_{0}^{\frac{i}{n}}Q_n \left(u, t\right)du \right| \leq O(1)\frac{1}{n}\left( \sum_{k=1}^{n} \varphi_k^2(t)\right)^{\frac{1}{2}} = O(1).\label{eq3.10}
	\end{eqnarray}
	
	At present we denote $F_n =\{1, 2, \dots n-1 \} \setminus D_n.$ Suppose that $i\in F_n.$ then according to definition of $D_n$  we have 
	\begin{eqnarray*}
		\left(f_n\left(\frac{i}{n}\right)-f_n\left(\frac{i+1}{n}\right)\right)\int_{0}^{\frac{i}{n}}Q_n(u,t)du =-\frac{1}{n} \left|\int_{0}^{\frac{i}{n}}Q_n(u,t) du\right|.
	\end{eqnarray*}
	According to last equality, we have
	\begin{eqnarray}
		\label{eq24} 
		\left|\sum_{i\in F_n} \left(f_n \left(\frac{i}{n}\right) -f_n \left(\frac{i+1}{n}\right)\right)\int_{0}^{\frac{i}{n}}  Q_n(u,t)du\right| = \frac{1}{n} \sum_{i\in F_n} \left| \int_{0}^{\frac{i}{n}} Q_n(u,t)du\right|.   
	\end{eqnarray}
	
	According to  \eqref{eq3.10} and \eqref{eq27} we obtain 
	\begin{eqnarray} \label{eq151}
		\left|S_1\right| &&= \left| \sum_{i=1}^{n-1}\left(f_n\left(\frac{i}{n}\right) -f_n\left(\frac{i+1}{n}\right)\right)\int_{0}^{\frac{i}{n}}Q_n(u,t)du \right|   \\ 
		&& \geq \left|\sum_{i\in F_n} \left(f_n\left(\frac{i}{n}\right) - f_n\left(\frac{i+1}{n}\right)\right) \int_{0}^{\frac{i}{n}}Q_n(u,t)du \right| \notag \\
		&& -  \left|\sum_{i\in D_n} \left(f_n\left(\frac{i}{n}\right) - f_n\left(\frac{i+1}{n}\right)\right) \int_{0}^{\frac{i}{n}}Q_n(u,t)du \right| \notag \\
		&& \geq  \frac{1}{n} \sum_{i\in F_n} \left|\int_{0}^{\frac{i}{n}}Q_n(u,t)du\right|- \frac{1}{n} \sum_{i\in D_n} \left|\int_{0}^{\frac{i}{n}}Q_n(u,t)du\right| \notag \\
		&& \frac{1}{n} \sum_{i=1}^{n-1} \left|\int_{0}^{\frac{i}{n}}Q_n(u,t)dt\right|- \frac{2}{n} \sum_{i\in D_n} \left|\int_{0}^{\frac{i}{n}}Q_n(u,t)du\right|  \notag \\
		&&\geq M_n(t)- O (1).\notag
	\end{eqnarray}
	Finally, from \eqref{eq23} using \eqref{eq27}, \eqref{eq28} and \eqref{eq151},  we have
	\begin{eqnarray*}
		\left|\int_{0}^{1}f_n(t)Q_n(u,t)du\right| \geq M_n(t)-O(1).
	\end{eqnarray*}
	The condition of Theorem  \ref{t3} imply 
	\begin{eqnarray}	\label{eq29} 
		\limsup_{n \to \infty} \left|\int_{0}^{1}f_n(t)Q_n(u,t)du\right|=+\infty.
	\end{eqnarray}
	Consider the sequence of liner and bounded functionals on the Banach space $Lip1$
	\begin{eqnarray*}
		U_n(f) = \int_{0}^{1}f_n(u)Q_n(u,t)du.
	\end{eqnarray*}
	By \eqref{eq29}
	\begin{eqnarray*}
		\limsup_{n \to \infty}U_n(f_n) = + \infty.
	\end{eqnarray*}
	On the other hand                
	\begin{eqnarray*}
		\left|\left|f_n\right|\right|_{Lip1}=	\left|\left|f_n\right|\right|_{C} + \sup_{x,y\in \left[0,1\right]} \frac{\left|f_n(x)-f_n(y)\right|}{\left|x-y\right|} \leq 2. 
	\end{eqnarray*}
	Consequently (see \eqref{eq29}), according to the Banach-Steinhaus theorem, there exist such a function $h\in Lip1$ that
	\begin{eqnarray}	\label{eq30} 
		\limsup_{n \to \infty} \left|\int_{0}^{1}h(u)Q_n(u,t)du\right|=+\infty.
	\end{eqnarray}
	Let
	\begin{eqnarray*}
		m(u)=\int_{0}^{u}h(v)dv,
	\end{eqnarray*}
	using lemma \ref{l4} we get
	\begin{eqnarray*}
		S_n (t,m)= m(1) \int_0^1  B_n (u,t)\,du-\int_0^1  h(u) Q_n (u,t)\,du.
	\end{eqnarray*}
	From \eqref{eq3.4} and \eqref{eq30}  we get
	\begin{eqnarray*}
		\limsup_{n \to \infty}\left|S_n(t,m)\right|=+\infty.
	\end{eqnarray*}
	As $m' = h \in Lip1$ Theorem \ref{t3} is proved.
	
\end{proof}

Now we show that the condition of Theorem \ref{t2}  $( q,p \in E(\varphi), \ q(u)=1, \ p(u)=u, \ u\in [0,1])$ don't guarantee that
\begin{eqnarray*}
	\limsup_{n \to \infty}\left|S_n(t,f)\right|<\infty,
\end{eqnarray*}
for any function $f \in C_L.$

Indeed
\begin{theorem}
	\label{theorem4} 
	There exists a function $g \in C_L$ and ONS $(G_n)$  such that 
	\begin{eqnarray*}
		\int_{0}^{1} G_n (u) du =0, \ \ \ \   \int_{0}^{1} u \ G_n (u) du =0,  \ \ \ n=1, 2, \dots, 
	\end{eqnarray*}
	and
	\begin{eqnarray*}
		\limsup_{n \to \infty}\left|S_n(x,g,G)\right|= \limsup_{n \to \infty}\left|	\sum_{k=1}^{n} C_n(g,G) G_n(x)\right|=+ \infty, 
	\end{eqnarray*}
	where 
	\begin{eqnarray*}
		C_n(g,G)= \int_{0}^{1} g(u) G_n (u) du =0,  \ \ \ (n=1, 2, \dots)
	\end{eqnarray*}
	and
	\begin{eqnarray*}
		S_n(x,g,G) = 	\sum_{k=1}^{n} C_n(g,G) G_n(x).
	\end{eqnarray*}

\end{theorem}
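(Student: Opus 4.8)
The plan is to treat this as the sharpness counterpart of Theorem \ref{t2}. The two hypotheses $\int_0^1 G_n\,du=0$ and $\int_0^1 uG_n\,du=0$ say exactly that every Fourier coefficient of $q(u)=1$ and of $p(u)=u$ vanishes, whence $S_n(x,q)\equiv0$ and $S_n(x,p)\equiv0$, and in particular $p,q\in E(\varphi)$. (The trailing identity $C_n(g,G)=0$ in the statement must be a misprint, since vanishing coefficients would force $S_n\equiv0$; the intended conclusion is the \emph{divergence} of the partial sums.) So the real content is that these two orthogonality conditions cannot, by themselves, replace the hypothesis $M_n(x)=O(1)$ of Theorem \ref{t2}. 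By Theorem \ref{t3} it therefore suffices to exhibit one ONS $(G_n)$ satisfying the two displayed orthogonality relations for which $\limsup_{n\to\infty}M_n(t)=+\infty$ at some $t\in G$; Theorem \ref{t3} then manufactures the required $g\in C_L$.

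First I would translate the side conditions into conditions on the primitives $g_k(u)=\int_0^uG_k$. Integrating by parts, $\int_0^1G_k=g_k(1)$ and then $\int_0^1 uG_k\,du=g_k(1)-\int_0^1 g_k$, so the two relations are equivalent to $g_k(1)=0$ and $\int_0^1 g_k=0$. I am thus free to use any orthonormal family whose primitives vanish at $1$ and have vanishing mean. I would build it blockwise: fix $t$, choose disjoint intervals $I_s$ accumulating near $t$, and on each $I_s$ install a finite orthonormal family $\{G_n:n\in J_s\}$ obtained by rescaling a fixed model profile concentrated inside $I_s$. Orthogonality across blocks is automatic from disjoint supports, and within a block the two requirements $g_k(1)=0$, $\int_0^1 g_k=0$ are merely two linear constraints, absorbed into the Gram--Schmidt normalization on $I_s$ at no cost to orthonormality.

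The decisive and hardest step is to calibrate the model profile and the block sizes $N_s=\max J_s$ so that
$$M_{N_s}(t)=\frac{1}{N_s}\sum_{i=1}^{N_s-1}\left|\int_0^{i/N_s}Q_{N_s}(u,t)\,du\right|\longrightarrow+\infty,$$
where $Q_{N_s}(u,t)=\sum_{k\le N_s}g_k(u)G_k(t)$. Forcing this to be large pushes the $G_k$ to concentrate near $t$ so that $G_k(t)$ is sizeable while $\|G_k\|_2=1$, exactly the mechanism behind Theorem A. Two opposing demands must be reconciled: on one hand the smoothed kernel $\int_0^{s}Q_{N_s}(u,t)\,du$ must retain a coherent sign over a fixed positive proportion of the nodes $s=i/N_s$, so that the averaged absolute values accumulate rather than cancel (constructive interference); on the other hand the concentration must be gentle enough that $\frac{1}{n^2}\sum_{k\le n}G_k^2(t)$ stays bounded, so that $t\in G$ as Theorem \ref{t3} requires. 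Balancing these---each block contributing at least a fixed amount to $M_{N_s}(t)$ while the normalized square sums remain controlled---is the technical crux.

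With such a system in hand the conclusion is immediate. At the chosen $t$ one has $\int_0^1B_n(u,t)\,du=0$ and $\int_0^1Q_n(u,t)\,du=0$, so the hypotheses of Theorem \ref{t3} are met, and it supplies $g=m\in C_L$, with $m(u)=\int_0^u h(v)\,dv$ and $h\in Lip1$ produced by the Banach--Steinhaus argument, for which $\limsup_{n\to\infty}|S_n(t,g,G)|=+\infty$. Combined with the two orthogonality identities already built into $(G_n)$, this is precisely the assertion of the theorem.
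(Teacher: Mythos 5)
Your reduction to Theorem \ref{t3} is legitimate as far as it goes: the two moment conditions force $C_k(q)=C_k(p)=0$ for all $k$, hence $S_n(t,q)=\int_0^1 B_n(u,t)\,du=0$ and $S_n(t,p)=0$, therefore also $\int_0^1 Q_n(u,t)\,du=0$, so the hypotheses \eqref{eq3.4} of Theorem \ref{t3} would hold automatically; and you are right that the ``$=0$'' attached to $C_n(g,G)$ in the statement is a misprint. But the proposal has a genuine gap, and the construction you sketch cannot close it. After the reduction, the entire content of the theorem is the existence of an ONS satisfying the two moment constraints with $\limsup_{n}M_n(t)=+\infty$ at some $t\in G$; you explicitly defer this (``the technical crux'') without carrying it out, so nothing is actually proved. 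Worse, the blockwise disjoint-support scheme is structurally incapable of producing such a system: if each $G_k$ with $k\in J_s$ is supported in $I_s$ and the $I_s$ are pairwise disjoint, then $G_k(t)\neq 0$ is possible only for the finitely many indices $k$ in the single block whose interval contains $t$ (note that $g_k(1)=0$ makes $g_k$ supported in $I_s$ too). Hence $Q_n(u,t)=\sum_{k\le n}g_k(u)G_k(t)$ stabilizes to one fixed $L^2$ function for all large $n$, and then
\begin{equation*}
M_n(t)\le \frac{1}{n}\sum_{i=1}^{n-1}\int_0^1\left|Q_n(u,t)\right|du\le \left(\int_0^1 Q_{n}^2(u,t)\,du\right)^{\frac12}=O(1),
\end{equation*}
i.e.\ the quantity you need to blow up is automatically bounded. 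Divergence of $M_n(t)$ requires infinitely many $G_k$ that are simultaneously nonzero (indeed large) at $t$, that is, overlapping supports --- precisely what your ``disjoint blocks plus Gram--Schmidt'' design excludes.

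The paper's proof goes a completely different and much shorter way, bypassing $M_n$ and Theorem \ref{t3} altogether. It applies Banach's Theorem A to the smooth function $f(u)=1-\cos 4\pi\left(u-\tfrac12\right)$ (chosen so that $f$ and $f'$ vanish at both endpoints), obtaining an ONS $(\varphi_n)$ with a.e.\ divergent partial sums, and then performs twice the operation ``compress to $\left[0,\tfrac12\right]$ and extend antisymmetrically'': $\varPhi_n(u)=\varphi_n(2u)$ on $\left[0,\tfrac12\right)$, $-\varphi_n\left(2\left(u-\tfrac12\right)\right)$ on $\left[\tfrac12,1\right]$, and then $G_n$ built from $\varPhi_n$ in the same way. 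The first folding kills $\int_0^1 G_n$, the second kills $\int_0^1 uG_n$ as well (the linear moment of $G_n$ reduces to $-\tfrac14\int_0^1\varPhi_n=0$), while the correspondingly compressed function $h$ lies in $C_L$ and has $C_n(h,G)=\tfrac14 C_n(f,\varphi)$, so its partial sums still diverge. If you wanted to salvage your route through Theorem \ref{t3}, you would have to build into the system exactly this kind of overlap at $t$, at which point you would essentially be redoing Banach's construction; the paper's direct argument is the economical one.
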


\begin{proof}
	Let us assume that $f(x)=1-\cos 4(u- 1/2) \pi.$ According to the Banach Theorem there exist an ONS $(\varphi_n),$ such that a.e. on $[0,1]$
	\begin{eqnarray}
		\label{eq31} \limsup_{n \rightarrow +\infty} \left|S_n (x, f, \varphi)\right| =+\infty.
	\end{eqnarray}
	The system   $\varPhi_n(u)$  we define as follows
	
	\begin{equation*}
		\varPhi_n\left( u\right) =\left\{ 
		\begin{array}{ccc}
			\varphi_n(2u), & \text{when} & u\in \left[ 0,\frac{1}{2}\right), \\ 
			\\
			-\varphi_n\left(2\left(u-\frac{1}{2}\right)\right), & \text{when} & u\in \left[ \frac{1}{2},1\right].
		\end{array}%
		\right.
	\end{equation*}
	
	It is easy to prove that $\left(\varPhi_n\right)$ is an ONS on $[0,1]$ and $\int_{0}^1 \varPhi_n (u) du =0 , \ n=0,1, \dots .$
	
	Now we investigate the function
	\begin{equation*} \label{eq33}
		g\left( u\right) =\left\{ 
		\begin{array}{ccc}
			f(2u), & \text{when} & u\in \left[ 0,\frac{1}{2}\right), \\ 
			\\
			0, & \text{when} & u\in \left[ \frac{1}{2},1\right].
		\end{array}%
		\right.
	\end{equation*}
	We have
	\begin{eqnarray*}
		C_n(g,\varPhi)&=& \int_{0}^{1} g(u) \varPhi_n (u) du =\int_{0}^{\frac{1}{2}} f\left(2u\right) \varphi_n\left(2u\right)du\\
		&=& \frac{1}{2}\int_{0}^{1} f\left(u\right) \varphi_n\left(u\right)du = \frac{1}{2} C_n\left(f, \varphi\right).
	\end{eqnarray*}
	Consequently (see \eqref{eq31})
	\begin{eqnarray}
		\label{eq3.15} \limsup_{n \rightarrow +\infty} \left|S_n (x, f, \varPhi)\right| = \frac{1}{2} \limsup_{n \rightarrow +\infty} \left|S_n (x, f, \varphi)\right|=+\infty.
	\end{eqnarray}
	
	Now we define the next ONS $G_n(u)$ as follows
	
	\begin{equation} \label{eq34}
		G_n\left( u\right) =\left\{ 
		\begin{array}{ccc}
			\varPhi_{n}(2u), & \text{when} & u\in \left[ 0,\frac{1}{2}\right), \\ 
			\\
			- \varPhi_{n} \left(2\left(u-\frac{1}{2}\right)\right), & \text{when} & u\in \left[ \frac{1}{2},1\right].
		\end{array}%
		\right.
	\end{equation}
	After we define 	
	\begin{equation*} \label{eq33}
		h\left( u\right) =\left\{ 
		\begin{array}{ccc}
			g(2u), & \text{when} & u\in \left[ 0,\frac{1}{2}\right), \\ 
			\\
			0, & \text{when} & u\in \left[ \frac{1}{2},1\right].
		\end{array}%
		\right.
	\end{equation*}
	We have
	\begin{eqnarray*}
		C_n(h,G)&=& \int_{0}^{1} h(u) G_n (u) du =\int_{0}^{\frac{1}{2}} g\left(2u\right) \varPhi_n\left(2u\right)du\\
		&=& \frac{1}{2}\int_{0}^{1} g\left(u\right) \varPhi_n\left(u\right)du = \frac{1}{2} C_n\left(g, \varPhi\right).
	\end{eqnarray*}
	Such we have that $h'\in Lip1$ and  (see \eqref{eq31})
	\begin{eqnarray}
		\label{eq3.16} \limsup_{n \rightarrow +\infty} \left|S_n (x, h, G)\right| = \frac{1}{2} \limsup_{n \rightarrow +\infty} \left|S_n (x, g, \varPhi)\right|=+\infty.
	\end{eqnarray}
	Let $p(u)= u  ,$ we get
	\begin{eqnarray*}
		C_n(p,G)&=& \int_{0}^{1} u\ G_n (u) du =\int_{0}^{\frac{1}{2}}u\ \varPhi_n\left(2u\right)du - \frac{1}{2} \int_{0}^{1} u\ \varPhi_{n}\left(2\left(u-\frac{1}{2}\right)\right) du  \\
		&=&\frac{1}{4}\int_{0}^{1} u\ \varPhi_n\left(u\right)du - \frac{1}{2}\int_{0}^{1} \left({u + \frac{1}{2}}\right) \varPhi_n\left(u\right)du = -\frac{1}{4} C_n\left(q, \varPhi\right)=0.
	\end{eqnarray*}
	Finally we receive \\
	1) If $q(u)=1$ and $p(u)=u$ when $u \in [0,1],$ then
	\begin{eqnarray*}
		C_n\left(p,G\right) = C_n\left(q, G\right) = 0 \text{ or } 	\limsup_{n \rightarrow +\infty} \left|S_n (x, p, G)\right| = \frac{1}{2} \limsup_{n \rightarrow +\infty} \left|S_n (x, q, G)\right|<+\infty.
	\end{eqnarray*}	
	And (see \eqref{eq3.16})\\
	2) $\limsup\limits_{n \rightarrow +\infty} \left|S_n (x, h, G)\right| = +\infty \text{ where } h' \in Lip1. $
	
	Theorem \ref{theorem4} is completely proved.
	
\end{proof}

\section{Problems of efficiency} \label{sec4}
\begin{theorem}
	\label{t5}
	Let $\varphi_{n}(u) = \sqrt{2} \cos 2 \pi n u,$ then for any $x\in [0,1]$
	
	\begin{eqnarray*}
		M_n(x)= O (1).
	\end{eqnarray*}
\end{theorem}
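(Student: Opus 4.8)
The plan is to compute everything explicitly for this trigonometric system and then exploit the $k^{-2}$ decay that two successive integrations produce. First I would evaluate the primitives $g_k$. Since $\varphi_k(u)=\sqrt2\cos 2\pi k u$, the definition \eqref{eq1.2} gives
\[
g_k(u)=\int_0^u\sqrt2\cos 2\pi k t\,dt=\frac{\sqrt2}{2\pi k}\sin 2\pi k u .
\]
Substituting into $Q_n(u,x)=\sum_{k=1}^n g_k(u)\varphi_k(x)$ and using the product-to-sum identity $\sin a\cos b=\tfrac12\bigl(\sin(a+b)+\sin(a-b)\bigr)$, one rewrites
\[
Q_n(u,x)=\frac{1}{2\pi}\sum_{k=1}^n\frac1k\Bigl(\sin 2\pi k(u+x)+\sin 2\pi k(u-x)\Bigr),
\]
a finite sum of sines with coefficients of size $1/k$.

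Next I would integrate over $[0,\tfrac{i}{n}]$ term by term. Each antiderivative of $\sin 2\pi k(u\pm x)$ contributes a further factor $1/k$, and after applying the sum-to-product identity $\cos 2\pi k(\tfrac{i}{n}+x)+\cos 2\pi k(\tfrac{i}{n}-x)=2\cos 2\pi k\tfrac{i}{n}\cos 2\pi k x$, the cross terms collapse to the closed form
\[
\int_0^{\frac{i}{n}}Q_n(u,x)\,du=\frac{1}{2\pi^2}\sum_{k=1}^n\frac{\cos 2\pi k x\,\bigl(1-\cos 2\pi k\tfrac{i}{n}\bigr)}{k^2}.
\]
The decisive feature is the $k^{-2}$ weight: because $|\cos 2\pi k x|\le 1$ and $|1-\cos 2\pi k\tfrac{i}{n}|\le 2$, each of these partial integrals is bounded by
\[
\Bigl|\int_0^{\frac{i}{n}}Q_n(u,x)\,du\Bigr|\le\frac{1}{\pi^2}\sum_{k=1}^\infty\frac{1}{k^2}=\frac16,
\]
a bound independent of $i$, $n$ and $x$. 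Hence by the definition \eqref{eq3},
\[
M_n(x)=\frac1n\sum_{i=1}^{n-1}\Bigl|\int_0^{\frac{i}{n}}Q_n(u,x)\,du\Bigr|\le\frac{n-1}{n}\cdot\frac16\le\frac16=O(1),
\]
which is the assertion of the theorem.

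There is no genuine obstacle here beyond careful bookkeeping: in sharp contrast to the general situation underlying Theorems \ref{t2} and \ref{t3}, the trigonometric system is so regular that the partial integrals $\int_0^{i/n}Q_n$ are \emph{uniformly} absolutely summable, so the averaging defining $M_n$ is not even needed to obtain boundedness. The only step demanding attention is the trigonometric simplification leading to the factor $1-\cos 2\pi k\tfrac{i}{n}$; a sign or argument slip there would spoil the telescoping into a single convergent series $\sum k^{-2}$. Once that identity is secured, the conclusion is forced purely by the convergence of $\sum_{k\ge1}k^{-2}$, and Lemma \ref{l1} is not required for this particular system.
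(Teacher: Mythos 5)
Your proof is correct, and it reaches the bound by a route that differs from the paper's in one technical respect. Both you and the paper start the same way: compute $g_k(u)=\frac{\sqrt2}{2\pi k}\sin 2\pi k u$ explicitly, so that $Q_n(u,x)=\frac1\pi\sum_{k=1}^n\frac1k\sin(2\pi ku)\cos(2\pi kx)$, and both reduce the matter to the convergence of $\sum_k k^{-2}$. The difference is in how the second factor of $k^{-1}$ is extracted. The paper performs only that first integration explicitly, bounds $\frac1n\sum_{i=1}^{n-1}|\cdot|$ by $\max_{1\le i\le n}\bigl|\int_0^{i/n}Q_n(u,x)\,du\bigr|$, and then estimates this maximum through the $L^2$ norm of $Q_n(\cdot,x)$ via the Cauchy--Schwarz inequality, after which orthogonality of the sine system (a Parseval-type computation) turns the square-summability of the $1/k$ coefficients into the bound $O(1)\bigl(\sum_k k^{-2}\bigr)^{1/2}$. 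You instead carry out the second integration explicitly, obtaining the closed form $\int_0^{i/n}Q_n(u,x)\,du=\frac{1}{2\pi^2}\sum_{k=1}^n k^{-2}\cos(2\pi kx)\bigl(1-\cos(2\pi k\,i/n)\bigr)$, and then need only the triangle inequality. Your trigonometric bookkeeping checks out (both the product-to-sum and the sum-to-product steps), and your route buys slightly more: a uniform explicit bound $\le 1/6$ on every partial integral, independent of $i$, $n$ and $x$, with no appeal to Cauchy--Schwarz or orthogonality at all, whereas the paper's version is shorter on the page but leans on the $L^2$ structure of the sines. Both arguments confirm your closing observation that the averaging in the definition of $M_n$ in \eqref{eq3} plays no role for this system, and neither uses Lemma \ref{l1}.
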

\begin{proof}
	Indeed
	\begin{eqnarray*}
		M_n(x) &=& \frac{1}{n} \sum_{i=1}^{n-1} \left|\int_{0}^\frac{i}{n} Q_n(u,x)du\right|=\frac{2}{n} \sum_{i=1}^{n-1} \left|\int_{0}^\frac{i}{n} \sum_{k=1}^{n} \int_{0}^{u} \cos 2 \pi k v dv du \cos 2 \pi k x \right| \\
		&=& O(1) \max_{1 \leq i \leq n} \left|\int_{0}^\frac{i}{n} \frac{1}{2 \pi} \sum_{k=1}^{n}  \frac{1}{k} \sin 2 \pi k u du \cos 2 \pi k x \right| \\
		& = &O(1) \left(\int_{0}^{1} \sum_{k=1}^{n} \frac{1}{k^2}\right)^{\frac{1}{2}} = O(1).
	\end{eqnarray*}
	
\end{proof}

\begin{theorem}
	\label{t6}
	Let $(X_n)$ be Haar system (see \cite{Alexits},Ch.2) then for any $x \in [0,1]$
	\begin{eqnarray*}
		M_n(x) = O(1).
	\end{eqnarray*}
	
\end{theorem}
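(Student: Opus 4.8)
The plan is to prove the much stronger uniform estimate $\left|\int_0^s Q_n(u,x)\,du\right| = O(1)$, where the $O(1)$ is independent of $s\in[0,1]$, $x\in[0,1]$ and $n$; this immediately settles the theorem, since
$$M_n(x)=\frac1n\sum_{i=1}^{n-1}\left|\int_0^{i/n}Q_n(u,x)\,du\right|\le \frac{n-1}{n}\max_{1\le i\le n-1}\left|\int_0^{i/n}Q_n(u,x)\,du\right|\le O(1).$$
Setting $G_k(s)=\int_0^s g_k(u)\,du$, by \eqref{eq4} and \eqref{eq1.2} we have $\int_0^s Q_n(u,x)\,du=\sum_{k=1}^n X_k(x)\,G_k(s)$, so the whole matter reduces to bounding this finite sum uniformly.

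The key input is the localization of the Haar system. Recall that $X_1\equiv 1$, and that for $k\ge 2$, writing $k=2^p+q$ with $1\le q\le 2^p$, the function $X_k$ is supported on a dyadic interval of length $2^{-p}$ on which it takes the values $\pm 2^{p/2}$. First I would record the elementary facts about the primitives: $g_k=\int_0^{\,\cdot}X_k$ is a nonnegative ``tent'' supported on the same dyadic interval, with total mass $\int_0^1 g_k = 2^{-3p/2-2}$; since $g_k\ge 0$, its primitive $G_k$ is nondecreasing, whence $0\le G_k(s)\le \int_0^1 g_k = 2^{-3p/2-2}$ for every $s$. (For $k=1$ one computes directly $G_1(s)=s^2/2\le \tfrac12$.)

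Next I would fix $x$ and group the terms of $\sum_{k=2}^n X_k(x)G_k(s)$ according to the level $p$. Because at a fixed level the supports of the Haar functions are pairwise disjoint dyadic intervals covering $[0,1]$, at most one index $k=2^p+q\le n$ satisfies $X_k(x)\ne 0$. For each such contributing term the two bounds above combine to give, uniformly in $s$,
$$\left|X_k(x)\,G_k(s)\right|\le 2^{p/2}\cdot 2^{-3p/2-2}=2^{-p-2}.$$
Summing this geometric series over the levels $p\ge 0$ yields $\bigl|\sum_{k=2}^n X_k(x)G_k(s)\bigr|\le \sum_{p\ge 0}2^{-p-2}=\tfrac12$, and adding the $k=1$ contribution keeps the total at most $1$, uniformly in $s$ and $x$. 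This is exactly the uniform bound claimed at the outset, and the theorem follows.

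The only genuine subtlety, and the point I would treat with care, is the behaviour at dyadic rational $x$, where $x$ may be an endpoint shared by two adjacent dyadic intervals at a given level; there one must invoke the adopted convention for the value of the Haar functions at their jump points, or simply observe that at most two functions per level can be nonzero, which replaces the bound $\tfrac12$ by $1$ but does not affect the conclusion. Everything else — the exact normalizing constants for $g_k$ and $G_k$ and the convergence of the geometric series — is entirely routine.
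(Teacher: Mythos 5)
Your proof is correct and follows essentially the same route as the paper's: both arguments exploit the dyadic localization of the Haar system (at each level at most one, or at dyadic points two, Haar functions are nonzero at $x$), bound the contribution of level $p$ to $\int_0^{s} Q_n(u,x)\,du$ by a term of size $O(2^{-p})$, and sum the resulting geometric series to get a bound on each partial integral that is uniform in $i$, $x$ and $n$, which trivially dominates the average defining $M_n(x)$. Your write-up is in fact somewhat more careful than the paper's (explicit tent-function constants, the $k=1$ term, and the convention issue at dyadic rationals), but the underlying idea is identical.
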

\begin{proof}
	If $2^s<m\leq2^{s+1}$ then 
	\begin{eqnarray*}
		\left|\int_{0}^{u}X_m(v) dv\right| \leq 2^{-\frac{s}{2}}
	\end{eqnarray*}
	and $(i=1,2, \dots ,n-1)$
	\begin{eqnarray*}
		\left| \int_{0}^{\frac{i}{n}} \int_{0}^{u} \sum_{m=2^s +1}^{2^{s+1}} X_m(v)dvX_m(x)du\right| \leq 2 \ 2^{-s}.
	\end{eqnarray*}	
	Finally 
	\begin{eqnarray*}
		M_n(x) &=& \frac{1}{n} \sum_{i=1}^{n-1} \left|\int_{0}^\frac{i}{n} Q_n(u,x)du\right|  \\
		&=& O(1)  \max_{1 \leq i \leq n} \left|\int_{0}^\frac{i}{n} \sum_{s=0}^{d} \int_{0}^{u} \sum_{m=2^s +1}^{2^{s+1}} X_m(v)dvX_m(x)du\right| = O(1).
	\end{eqnarray*}

	From here it is evident that Theorem \ref{t6} is valid.
	
\end{proof}

\section{Conclusion}\label{sec13}

Based on the discussion presented in this article, it is evident that although Fourier partial sums related to general orthonormal systems do not converge for all functions $f$ classified under a differentiable class of $ C_L$ functions, we can identify a specific subset of orthonormal systems. These subsets include functions that meet particular criteria, ensuring that the Fourier partial sums for $ C_L$ class functions exhibit convergence (refer to Theorem \ref{t2}). Moreover, we have established that the criteria applied to the functions of the orthonormal systems are both precise and reliable.

Additionally, it is important to note that each orthonormal system encompasses a subsystem for which the general Fourier series of any function $ f$ within the $ C_L$ class converges almost everywhere on the interval $[0,1].$ 

\section*{Declarations}

\begin{itemize}
	\item Funding: The research of third author is supported by Shota Rustaveli National Science Foundation grant no. FR-24-698;
	\item Conflict of interest/Competing interests: Author Giorgi Cagareishvili, Author Vakhtang Tsagareishvili, and Author Giorgi Tutberidze declare that they have no competing interests;
	\item Ethics approval and consent to participate: Not applicable;
	\item Consent for publication: Not applicable;
	\item Data availability: Contact the corresponding author for data requests;
	\item Materials availability: Contact the corresponding author for data requests;
	\item Code availability: Not applicable;
	\item Author contribution: All authors whose names appear on the submission contributed equally to this work. They made substantial contributions to this work, approved the version to be published, and agreed to be accountable for all aspects of the work and ensure that questions related to the accuracy or integrity of any part of the work are appropriately investigated and resolved.
\end{itemize}

\end{document}